\newtheorem{theorem}{Theorem}[section]
\newtheorem{proposition}[theorem]{Proposition}
\newtheorem{lemma}[theorem]{Lemma}
\newtheorem{MT}{Main Theorem}
\newtheorem{proof}{\textmd{\textit{Proof.}}}
\newtheorem{remark}[theorem]{Remark}
\newcommand{\qedd}{\hfill \Box}
\newcommand{\wt}{\widetilde}
\title{The Structure Theorem for The cut locus of 
a Certain Class of Cylinders of Revolution II
\footnote{
Mathematics Subject Classification (2010)\,: 53C22.}
\footnote{
Keywords: cut point, cut locus,
cylinder of revolution, half period function 
}.}
\author{Pakkinee CHITSAKUL}
\date{}
\begin{document}

\maketitle

\begin{abstract}

In the previous paper \cite{C}, the structure of the cut locus was 
determined for a class of surfaces of revolution homeomorphic to a cylinder. 
In this paper, we prove the structure theorem of the cut locus for 
a wider class of surfaces of revolution homeomorphic to a cylinder.

\end{abstract}

\section{Introduction}
The following structure theorem was proved in \cite{C} for a class of surfaces of 
revolution homeomorphic to a cylinder.

{\noindent\bf Theorem} 
Let $(M ,ds^2)$ be a complete Riemannian manifold $R^1\times S^1$ 
with a warped product metric $ds^2=dt^2+m(t)^2 d\theta^2$ 
of the real line $(R^1,dt^2)$ and the unit circle $(S^1,d\theta ^2)$. 
Suppose that the warping function $m$ is a positive-valued even function 
and the Gaussian curvature of $M$ is decreasing along the half meridian 
$t^{-1}[0,\infty] \cap \theta^{-1}(0)$. If the Gaussian curvature of $M$ 
is positive on $t=0$, then the structure of the cut locus $C_q$ of a 
point 
$q \in  \theta^{-1}(0)$ in $M$ is given as follows:
\begin{enumerate}
\item
The cut locus $C_q$ is the union of a subarc of the parallel $t=-t(q)$ 
opposite to $q$ and the meridian opposite to 
$q$ if $|t(q)|<t_0 := \sup \{ t>0|m'(t)<0 \} $ and $\varphi(m(t(q)))<\pi$. 
More precisely,
\begin{equation*}
C_q=\theta^{-1}(\pi) \cup (t^{-1}(-t(q)) \cap 
\theta^{-1}[\varphi(m(t(q))),2\pi-\varphi(m(t(q)))])
\end{equation*}
\item
The cut locus $C_q$ is the meridian $\theta^{-1}(\pi)$ opposite to $q$ 
if $\varphi(m(t(q))) \geq \pi$ or if $|t(q)|\geq t_0$.
\end{enumerate}
Here, the {\it half period  function } $\varphi(\nu)$ on $(\inf m, m(0))$ is defined as 
\begin{equation}\label{eq1.1}
\varphi{(\nu)}:= 2 \int^{0}_{-\xi(\nu)} \frac{\nu}{m\sqrt{m^2-\nu^2}}dt
= 2 \int_{0}^{\xi(\nu)} \frac{\nu}{m\sqrt{m^2-\nu^2}}dt,
\end{equation}
where $\xi(\nu):= \min \{t>0 | m(t)= \nu \}$.
Notice that the point $q$ is an arbitrarily given point 
if the coordinates $(t,\theta)$ are chosen so as to satisfy $\theta(q)=0$.
\bigskip

Crucial properties of the manifold  $(M ,ds^2)$ in the theorem above are
\begin{enumerate}
\item
$M$ has a  reflective symmetry with respect to a parallel.
\item
 The Gaussian curvature is decreasing along each upper half meridian.
\end{enumerate}

In this paper, the second property is replaced by the following property:

 {\it The cut locus of a point on $\tilde t=0$ is a nonempty subset of $\tilde t=0,$}
for the universal covering space $(\tilde{M},d\tilde{t}^2+m(\tilde{t})^2d\tilde{\theta}^2)$ of a cylinder of revolution $(M,dt^2+m(t)^2d\theta^2)$ with a reflective symmetry with respect to the parallel $t=0.$
\bigskip

We will prove the following structure theorem of the cut locus for a cylinder of revolution satisfying the property above.

\begin{MT}
Let $(M,ds^2)$ denote a complete Riemannian manifold $R^1 \times S^1$ 
with a warped product metric $ds^2=dt^2+m(t)^2d\theta^2$ of the real line 
$(R^1,dt^2)$ and the unit circle $(S^1,d\theta^2)$, and by 
$(\widetilde{M},d\tilde{t}^2+m(\tilde{t})^2d\tilde{\theta}^2)$
we denote the universal covering space of $(M,ds^2)$. Suppose that $m$ is
an even positive-valued function. If the cut locus of a point on 
$\tilde t^{-1}(0)$ is a nonempty subset of $\tilde{t}^{-1}(0)$, 
then the cut locus $C_q$ of a point $q$ of $M$
with $|t(q)|<t_0 := \sup \{t>0\;| \;\:  m'(t)<0\}$
equals the union of a subarc of the parallel $t=-t(q)$
opposite to $q$ and the meridian opposite to $q$. More precisely,
there exists a number $t_\pi\in[0,t_0)$ such that
for any point $q$ with $|t(q)|<t_\pi,$
$$C_q=\theta^{-1}(\pi) \cup \left( t^{-1}(-t(q)) \cap 
\theta^{-1}[\varphi(m(t(q))),2\pi-\varphi(m(t(q)))]\right)$$
and for any point $q$ with $t_\pi\leq |t(q)|<t_0,$
$C_q=\theta^{-1}(\pi).$
Moreover,
if $t_0$ is finite, then $C_q=\theta^{-1}(\pi)$ for any point $q$ with $|t(q)|=t_0.$

Here the coordinates $(t,\theta)$ are chosen so as to satisfy $\theta(q)=0$.
Notice that the domain of the half period   function $\varphi(\nu)$ is  $(m(t_0),m(0))$ (respectively $(\inf m,m(0))$ ) if $t_0$ is finite (respectively infinite).
\end{MT}
\begin{remark}
If the Gaussian curvature of the manifold $M$ in the Main Theorem is nonpositive on 
$\tilde t^{-1}(t_0,\infty)$, then the cut locus $C_q$
 of any point $q$ with $|t(q)|>t_0$ is equal to  $\theta^{-1}(\pi),$ the meridian opposite 
 to $q$.
 \end{remark}

We refer to \cite{C}, \cite{SST}  and \cite{ST} for some fundamental properties of geodesics on a surface of revolution and the structure theorem of the cut locus on a surface.
\section{A necessary and sufficient condition 
for $\mathbf{\varphi(\nu)}$ to be decreasing}

A complete Riemannian manifold $(M,ds^2)$ homeomorphic to $R^1\times S^1$ is called a
{ \it cylinder of revolution}
 if $ds^2=dt^2+m(t)^2d \theta^2$  is  a warped 
product metric of the real line $(R^1,dt^2)$ and 
the unit circle $(S^1,d\theta^2)$.

Throughout this paper, we assume that the warping function $m$ of a cylinder of revolution $M$ is an even function. 
Hence $M$ has a reflective symmetry with respect to $t=0$, 
which is called the {\it{equator.} }
Let $(\wt{M},d\tilde{s}^2)$ denote the universal covering space of 
$(M,ds^2)$. Thus $d\tilde s^2=d\tilde t^2+m(\tilde t)^2d\tilde\theta^2.$
Since $m'(0)=0,$ it follows from Lemma 7.1.4 in \cite{SST} that the equator $t=0$ and $\tilde t=0$ are geodesics in $M$ and $\wt M$ respectively.

The following  lemma is a corresponding one to Lemma 3.2 in \cite{BCST} in the 
case of a two-sphere  of revolution.
\begin{lemma}\label{lem2.1}
If the cut locus of a point in $\tilde{t}^{-1}(0)$ 
is a nonempty subset of $\tilde{t}^{-1}(0)$, 
then the Gaussian curvature of $\wt M$ is positive on 
$\tilde{t}^{-1}(0)$ and for any $t>0$ satisfying $m'|_{(0,t)}<0$, 
the function $\varphi(\nu)$ is decreasing on $(m(t),m(0))$.
\end{lemma}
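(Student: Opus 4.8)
The plan is to carry everything out in the universal covering $(\wt M,d\tilde s^2)$, fixing $p=(0,0)$ on the equator $\tilde t^{-1}(0)$ and using the reflection $\iota\colon(\tilde t,\tilde\theta)\mapsto(-\tilde t,\tilde\theta)$, an isometry fixing $p$ and the equator pointwise and sending every geodesic from $p$ to one of the same length. For $\nu\in(0,m(0))$ let $\gamma_\nu$ be the unit-speed geodesic from $p$ with Clairaut constant $\nu$ that initially ascends; since $m>\nu$ on $[0,\xi(\nu))$ it ascends monotonically to height $\xi(\nu)$, turns around there (when $m'(\xi(\nu))<0$), and returns to the equator at $(0,\varphi(\nu))$ after some arclength $L(\nu)$, at which point $\iota\circ\gamma_\nu$ is a second geodesic from $p$ of the same length reaching that point. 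The first step is to note that, under the hypothesis $C_p\subset\tilde t^{-1}(0)$, every such $\gamma_\nu$ is minimizing all the way to its first return and has no conjugate point strictly before it: otherwise the cut point of $\gamma_\nu$ would occur at some interior arclength $s_0\in(0,L(\nu))$, hence at a point with $\tilde t>0$, which would be a point of $C_p$ off the equator. (If no geodesic from $p$ returned, i.e.\ $m\ge m(0)$ everywhere, then $\tilde t^{-1}(0)$ would be a globally minimizing ``waist'' and every cut point of $p$ would sit off it; so the hypothesis forces $m$ to drop below $m(0)$, and then $\gamma_\nu$ returns for all $\nu$ near $m(0)$.)

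The analytic core is the identity relating $\varphi'(\nu)$ to the Jacobi field along $\gamma_\nu$. Let $J_\nu$ be the normal Jacobi field with $J_\nu(0)=0$ obtained by rotating the initial direction of $\gamma_\nu$ about $p$, let $N$ be the parallel unit normal along $\gamma_\nu$ oriented so that $j_\nu(s):=\langle J_\nu(s),N(s)\rangle$ has $j_\nu'(0)>0$; then $j_\nu''+Gj_\nu=0$, $j_\nu(0)=0$, and the first zero of $j_\nu$ after $0$ is the first conjugate point along $\gamma_\nu$. Differentiating $\gamma_\nu(L(\nu))=(0,\varphi(\nu))$ in $\nu$ and cancelling the components along $\gamma_\nu'$ leaves $J_\nu(L(\nu))=\varphi'(\nu)\,\langle\partial_{\tilde\theta},N\rangle\,N$ at the return point, where a short computation with Clairaut's relation gives $\langle\partial_{\tilde\theta},N\rangle=-m(0)\cos\beta<0$ with $\sin\beta=\nu/m(0)$. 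By the first step $j_\nu$ has no zero in $(0,L(\nu))$, so $j_\nu(L(\nu))\ge0$, and therefore $\varphi'(\nu)=j_\nu(L(\nu))/\langle\partial_{\tilde\theta},N\rangle\le0$ for every $\nu$ whose geodesic returns — in particular on every interval $(m(t),m(0))$ with $m'|_{(0,t)}<0$, giving the claimed (non-strict) monotonicity of $\varphi$.

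Positivity of the curvature on $\tilde t^{-1}(0)$, where it equals the constant $G(0)=-m''(0)/m(0)$, then follows. Assuming (from the first step) that $m$ dips below $m(0)$, the geodesics $\gamma_\nu$ with $\nu\uparrow m(0)$ have turning points governed near $\tilde t=0$ by $m(t)^2\approx m(0)^2+m(0)m''(0)t^2$, and a direct estimate of $\varphi(\nu)=2\int_0^{\xi(\nu)}\nu\,(m\sqrt{m^2-\nu^2})^{-1}\,dt$ shows $\varphi(\nu)\to\pi/(m(0)\sqrt{G(0)})$ when $G(0)>0$ but $\varphi(\nu)\to+\infty$ when $G(0)\le0$. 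Since the monotonicity just proved keeps $\varphi$ bounded above near $m(0)^-$, the case $G(0)\le0$ is impossible; hence $G(0)>0$, so $m''(0)<0$, $m'<0$ on a right-neighbourhood of $0$, and $t_0>0$, so the domain of the second assertion is non-empty.

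The step I expect to be the main obstacle is upgrading ``non-increasing'' to ``strictly decreasing''. By the identity above, $\varphi'(\nu_0)=0$ would make $L(\nu_0)$ at once the cut point and the first conjugate point of $\gamma_{\nu_0}$, and $\varphi$ constant on a subinterval would force a one-parameter family of geodesics from $p$ to refocus at a single point of the equator with a common length. I plan to rule this out by showing that such a focal configuration produces cut points of $p$ off the equator among the geodesics in the family nearby (or, if $m$ is taken real-analytic, directly from analyticity of $\varphi$), and to exclude an isolated critical point by a second-order argument on $j_\nu$ combined with the monotonicity already in hand; tightening this, together with handling degenerate turning points where $m'(\xi(\nu))=0$, is where the real work lies.
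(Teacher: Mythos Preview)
Your Jacobi-field argument for $\varphi'\le 0$ is correct and is essentially the mechanism behind the result the paper invokes (it cites Proposition~4.6 of \cite{C} and gives no details). Note, however, that your concern about upgrading to strict monotonicity is misplaced: from the way Lemma~2.1 is later applied in the proof of Lemma~4.1 (where $\varphi(\nu_0)=\varphi(\nu_1)$ together with ``decreasing'' is used to deduce that $\varphi$ is constant on $[\nu_0,\nu_1]$), the paper is using ``decreasing'' in the weak sense. So the inequality $\varphi'\le 0$ that you already obtain is all that is needed, and the program you outline in your last paragraph is unnecessary.

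Where you diverge substantially from the paper is the positivity of $G$ on the equator. The paper's argument is a single geometric observation you have missed: an endpoint $q$ of $C_{\tilde p}$ is conjugate to $\tilde p$ along a minimal geodesic; since $C_{\tilde p}\subset\tilde t^{-1}(0)$ and the equator $\tilde t^{-1}(0)$ is itself a geodesic of constant Gaussian curvature $G(0)$, one takes the equatorial subarc as that minimal geodesic, and the existence of a conjugate point along it forces $G(0)>0$. Your route---first establishing $\varphi'\le 0$ on the whole return domain, then arguing that $\lim_{\nu\uparrow m(0)}\varphi(\nu)=+\infty$ whenever $G(0)\le 0$ to contradict boundedness---does work in principle, but is considerably more roundabout, and your Taylor sketch $m(t)^2\approx m(0)^2+m(0)m''(0)t^2$ does not literally handle the cases $m''(0)=0$ or $m''(0)>0$, where the turning height $\xi(\nu)$ is not governed by the second-order expansion at $\tilde t=0$; each of those cases requires its own (routine but separate) estimate. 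The paper's direct conjugate-point argument bypasses all of this analysis.
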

\begin{proof}
Let $q$ be  an end point of the cut locus of  a point $p\in\tilde  t^{-1}(0).$ 
Since the end point $q$ is conjugate to $p$ along the subarc of $\tilde t^{-1}(0),$ 
the Gaussian curvature on $\tilde t^{-1}(0)$ is positive.
We omit the proof of the second claim, since the proof of Proposition 4.6 in \cite {C} is applicable.$\qedd$
\end{proof}

\begin{lemma}\label{lem2.2}
Suppose that the Gaussian curvature of $\wt M$ 
is positive on $\tilde{t}^{-1}(0)$. Let $t>0$ be any  number 
satisfying $m'|_{(0,t)}<0$. If $\varphi(\nu)$ is  decreasing 
on $(m(t),m(0))$ then for any point $\tilde{p} \in \tilde{t}^{-1}(0)$,
$C_{\tilde{p}} \cap \tilde{t}^{-1}(-t,t)$ is a nonempty subset of 
$\tilde{t}^{-1}(0)$. Here $C_{\tilde p}$ denotes the cut locus of $\tilde p.$
\end{lemma}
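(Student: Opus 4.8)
The plan is to describe every geodesic issuing from $\tilde p$ by the Clairaut relation and to recognise the relevant cut points as the points where these geodesics first return to the equator. Normalise the coordinates so that $\tilde\theta(\tilde p)=0$. By the rotational symmetry and the reflective symmetry in $\tilde t^{-1}(0)$, it suffices to consider the unit-speed geodesics $\gamma_\nu$ from $\tilde p$ with $\tilde t'(0)\ge 0$, indexed by their Clairaut constant $\nu\in[0,m(0)]$; here $\nu=0$ is the meridian through $\tilde p$ and $\nu=m(0)$ is the equator. For $\nu\in(m(t),m(0))$ the value $\nu$ lies in the domain of $\varphi$, and since $m'|_{(0,t)}<0$ the coordinate $\tilde t$ along $\gamma_\nu$ increases strictly from $0$ to $\xi(\nu)\in(0,t)$, where $\gamma_\nu$ is tangent to the parallel $\tilde t^{-1}(\xi(\nu))$, and then decreases strictly back to $0$. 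By the defining formula \eqref{eq1.1}, the point $r(\nu)$ at which $\gamma_\nu$ first returns to the equator has $\tilde\theta$-coordinate $\varphi(\nu)$, and $\gamma_\nu$ reaches it after some length $\ell(\nu)$; the mirror geodesic $\gamma_{-\nu}$ reaches the same point $r(\nu)$ after the same length $\ell(\nu)$, but with a different unit tangent vector there, since $\tilde t'\ne 0$ at an equator crossing with $\nu<m(0)$.

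The core step is the claim that $\gamma_\nu|_{[0,\ell(\nu)]}$ is minimising for every $\nu\in(m(t),m(0))$ and, more generally, that no geodesic from $\tilde p$ has a cut point in the open strip $\tilde t^{-1}(-t,t)$ off $\tilde t^{-1}(0)$. I would prove this by analysing the variation field $\partial_\nu\gamma_\nu$, which on a surface of revolution is an explicit Jacobi field along $\gamma_\nu$: the hypothesis that $\varphi$ is decreasing makes this field non-degenerate up to time $\ell(\nu)$, so that $\tilde p$ has no conjugate point on $\gamma_\nu|_{(0,\ell(\nu))}$, and the positivity of the curvature on $\tilde t^{-1}(0)$ governs its behaviour where $\gamma_\nu$ leaves and re-enters the equator. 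Together with the fact that two such geodesics $\gamma_{\nu_1},\gamma_{\nu_2}$ with $\nu_1\ne\nu_2$ cannot cross in the open half $\tilde t>0$ — a crossing would force them, by the monotonicity of $\varphi$, back to the equator at a common point, contradicting $\varphi(\nu_1)\ne\varphi(\nu_2)$ — this gives that $(\nu,s)\mapsto\gamma_\nu(s)$ is injective up to each first return, whence every $\gamma_\nu|_{[0,\ell(\nu)]}$ is minimising. This is the surface-of-revolution argument behind Proposition 4.6 of \cite{C} (see also \cite{SST}), which I would adapt to the present setting.

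Granting the core step, the lemma is assembled quickly. Each $r(\nu)$, $\nu\in(m(t),m(0))$, is joined to $\tilde p$ by the two distinct minimising geodesics $\gamma_\nu|_{[0,\ell(\nu)]}$ and $\gamma_{-\nu}|_{[0,\ell(\nu)]}$, so $r(\nu)\in C_{\tilde p}$, and $r(\nu)\in\tilde t^{-1}(0)\subset\tilde t^{-1}(-t,t)$. Since $\varphi$ is continuous and decreasing, hence non-constant, $\{\,r(\nu):\nu\in(m(t),m(0))\,\}$ is a non-degenerate subarc of $\tilde t^{-1}(0)$, so in particular $C_{\tilde p}\cap\tilde t^{-1}(-t,t)\ne\emptyset$. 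Conversely, by the injectivity above every point with $\tilde t\in(0,t)$ that is not one of the $r(\nu)$ is an interior point of a minimising segment from $\tilde p$ — the geodesics with $0\le\nu\le m(t)$ reach $\tilde t^{-1}(t)$ before acquiring a cut point, and symmetrically for $\tilde t\in(-t,0)$ — so it lies outside $C_{\tilde p}$; hence $C_{\tilde p}\cap\tilde t^{-1}(-t,t)\subseteq\tilde t^{-1}(0)$, which with non-emptiness is the assertion of the lemma.

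The main obstacle is precisely the core step: showing that $\gamma_\nu$ remains minimising all the way to its first return to the equator, equivalently that the geodesic family is injective and free of conjugate points there. This is where the hypothesis that $\varphi$ is decreasing is indispensable — without it geodesics may re-meet the equator non-monotonically and the sweeping argument collapses — and where the positivity of the curvature on $\tilde t^{-1}(0)$ is used to control the Jacobi fields near the equator; the remaining steps are routine manipulations of the Clairaut relation.
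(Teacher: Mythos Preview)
Your overall strategy is sound and, like the paper, ultimately defers the heart of the subset claim to the literature; the paper cites Lemma~3.3 of \cite{BCST} rather than Proposition~4.6 of \cite{C}, but the content is the same. Two points are worth flagging.

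First, the paper's non-emptiness argument is considerably simpler than yours: since the equator $\tilde t^{-1}(0)$ is itself a geodesic and the Gaussian curvature along it is a positive constant, $\tilde p$ has a conjugate point on the equator, and hence a cut point there. This uses only the curvature hypothesis and is independent of the core step. Your route, by contrast, obtains non-emptiness only as a corollary of the minimality of $\gamma_\nu|_{[0,\ell(\nu)]}$, so it rests on machinery you have not yet secured.

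Second, the specific justification you give for the no-crossing claim is not correct. A crossing of $\gamma_{\nu_1}$ and $\gamma_{\nu_2}$ at a point with $\tilde t>0$ does not force them to a \emph{common} return point on the equator; each simply re-enters the equator at $\varphi(\nu_1)$ and $\varphi(\nu_2)$ respectively, and nothing in the monotonicity of $\varphi$ converts a crossing in the strip into an equality of these values. (Note also that the paper treats ``decreasing'' as non-increasing---see the handling of $\varphi(\nu)=\varphi(\nu_0)$ on an interval in Lemma~\ref{lem4.1}---so $\varphi(\nu_1)=\varphi(\nu_2)$ can genuinely occur.) The argument in \cite{BCST}, and the one the paper later runs in Proposition~\ref{prop4.2} for a general base point, proceeds differently: one supposes an endpoint of the cut locus lies off the equator, observes it must be conjugate, and then uses the Clairaut relation together with Corollary~7.2.1 of \cite{SST} to rule out conjugate points along geodesics that are not tangent to a parallel. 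Since you explicitly defer to the references this is not fatal to your outline, but the heuristic you sketch should be replaced by that endpoint-and-conjugate-point argument.
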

\begin{proof}
Choose an arbitrary point
 $\tilde p\in\tilde  t^{-1}(0)$ 
and fix it.
Since the Gaussian curvature is positive constant on
 $\tilde t=0,$ there exists a conjugate point of $\tilde p$ along the subarc of $\tilde t=0.$
Thus, $C_{\tilde p}\cap \tilde t^{-1}(-t,t)$ is nonempty. We omit the proof of the claim that $C_{\tilde p}\cap \tilde t^{-1}(-t,t)$ 
is a subset of $\tilde t^{-1}(0),$ since 
the proof of Lemma 3.3 in \cite {BCST} is still valid in our case.
$\qedd$
\end{proof}

Combining Lemmas \ref{lem2.1} and \ref{lem2.2} we get

\begin{proposition}\label{prop2.3}
Suppose that  $m' \neq 0$ on $(0,\infty)$. Then the cut locus of a point 
on $\tilde{t}^{-1}(0)$ is a nonempty subset of $\tilde{t}^{-1}(0)$ 
if and only if the Gaussian curvature of ${\wt M}$ is positive on 
$\tilde{t}^{-1}(0)$ and the half period function $\varphi(\nu)$ defined by \eqref{eq1.1} is decreasing on 
$(\inf m, m(0))$.
\end{proposition}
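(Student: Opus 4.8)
The plan is to obtain Proposition~\ref{prop2.3} as a bookkeeping combination of Lemmas~\ref{lem2.1} and~\ref{lem2.2}; the extra hypothesis $m'\neq 0$ on $(0,\infty)$ serves only to let us invoke both lemmas simultaneously for \emph{every} $t>0$. First I would record the elementary preliminaries: since $m$ is even and smooth we have $m'(0)=0$, and since $m'$ is continuous and nowhere zero on $(0,\infty)$ it has a constant sign there. If $m'>0$ on $(0,\infty)$, then $m$ is strictly increasing on $[0,\infty)$, so $\inf m=m(0)$, the asserted domain $(\inf m,m(0))$ of $\varphi$ is empty, and moreover $m'(0)=0$ together with $m'>0$ just after $0$ forces $m''(0)\geq 0$, so the Gaussian curvature $-m''/m$ is $\leq 0$ on $\tilde t^{-1}(0)$; hence the right-hand side of the equivalence is false, and by Lemma~\ref{lem2.1} the left-hand side is false as well, so the equivalence holds vacuously. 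Thus we may assume $m'<0$ on $(0,\infty)$, i.e.\ $m'|_{(0,t)}<0$ for all $t>0$. Two further remarks I would make here: the intervals $(m(t),m(0))$, $t>0$, are nested and, as $m$ decreases continuously to $\inf m$, satisfy $\bigcup_{t>0}(m(t),m(0))=(\inf m,m(0))$; and $\bigcup_{t>0}\tilde t^{-1}(-t,t)=\wt M$. Consequently a function decreasing on each $(m(t),m(0))$ is decreasing on $(\inf m,m(0))$, and a set that meets $\tilde t^{-1}(-t,t)$ in a nonempty subset of $\tilde t^{-1}(0)$ for every $t>0$ is itself a nonempty subset of $\tilde t^{-1}(0)$.

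For the forward implication, assume the cut locus of a point of $\tilde t^{-1}(0)$ is a nonempty subset of $\tilde t^{-1}(0)$. Lemma~\ref{lem2.1} directly gives that the Gaussian curvature of $\wt M$ is positive on $\tilde t^{-1}(0)$ and that $\varphi$ is decreasing on $(m(t),m(0))$ for every $t>0$; by the nesting remark above, $\varphi$ is decreasing on $(\inf m,m(0))$. For the converse, assume the Gaussian curvature is positive on $\tilde t^{-1}(0)$ and $\varphi$ is decreasing on $(\inf m,m(0))$, and fix $\tilde p\in\tilde t^{-1}(0)$. For each $t>0$ we have $m'|_{(0,t)}<0$ and $\varphi$ decreasing on $(m(t),m(0))\subseteq(\inf m,m(0))$, so Lemma~\ref{lem2.2} yields that $C_{\tilde p}\cap\tilde t^{-1}(-t,t)$ is a nonempty subset of $\tilde t^{-1}(0)$; letting $t$ range over $(0,\infty)$ and using $\bigcup_{t>0}\tilde t^{-1}(-t,t)=\wt M$, we conclude that $C_{\tilde p}$ is a nonempty subset of $\tilde t^{-1}(0)$. (Since $\wt M$ is invariant under translation in $\tilde\theta$, which acts transitively on $\tilde t^{-1}(0)$, it is immaterial whether we speak of ``a point'' or ``any point'' of $\tilde t^{-1}(0)$.)

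Since the proposition is purely a matter of assembling the two lemmas, I do not expect a genuine obstacle. The only points requiring a little care are upgrading ``$\varphi$ decreasing on each interval of a nested exhausting family'' to ``$\varphi$ decreasing on the union'', the analogous exhaustion argument for the cut locus, and disposing of the degenerate sign possibility $m'>0$, in which the domain of $\varphi$ collapses to the empty set. All of the substantive work is carried by Lemmas~\ref{lem2.1} and~\ref{lem2.2}, whose proofs are inherited from \cite{C} and \cite{BCST}.
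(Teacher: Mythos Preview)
Your proposal is correct and follows exactly the approach the paper takes: the paper's ``proof'' of Proposition~\ref{prop2.3} is simply the sentence ``Combining Lemmas~\ref{lem2.1} and~\ref{lem2.2} we get,'' with no further details. Your write-up supplies the routine bookkeeping the paper omits---the sign dichotomy for $m'$, the nested-interval exhaustion $\bigcup_{t>0}(m(t),m(0))=(\inf m,m(0))$, and the exhaustion $\bigcup_{t>0}\tilde t^{-1}(-t,t)=\wt M$---all of which are sound.
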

\section{Preliminaries}
From now on, we assume that {\it the cut locus of a point on $\tilde t=0$ is a nonempty subset of $\tilde t=0.$}  Hence, from Lemma \ref{lem2.1}, the function $\varphi(\nu)$ is decreasing on $(m(t_0),m(0)),$ where 
$t_0:=\sup\{\: t>0\: |  \:  m'(t)<0\}$ and   $m(t_0)$ means $\inf m$ when $t_0=\infty.$
For each $\nu\in[0,m(0))$ let $\gamma_{\nu}:[0,\infty)\to \wt M$ denote a unit speed geodesic emanating from the  point $\tilde p:=(\tilde t,\tilde\theta)^{-1}(0,0)$ on $\tilde t^{-1}(0)$ with Clairaut constant $\nu.$ 
It is known (see \cite {C}, for example) that $\gamma_\nu$ intersects $\tilde t^{-1}(0)$ again  at the point $(\tilde t,\tilde \theta)^{-1}(0,\varphi(\nu))$ if $\nu$ is greater than $m(t_0).$ 
Notice that $\gamma_\nu$ is a submeridian of $\tilde\theta=0,$ when $\nu=0.$

\begin{lemma}\label{lem3.1}
If $0\leq\nu\leq m(t_0),$ then $\gamma_{\nu}$  is not tangent to any parallel arc $\tilde t=c.$
In particular, the geodesic does not intersect $\tilde t=0$ again.

\end{lemma}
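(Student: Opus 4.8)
The plan is to use the Clairaut relation for geodesics on a surface of revolution. Recall that along $\gamma_\nu$, parametrized by $\tilde t = \tilde t(s)$, the Clairaut constant satisfies $m(\tilde t(s))\cos\eta(s) = \nu$, where $\eta(s)$ is the angle between $\gamma_\nu$ and the parallel through $\gamma_\nu(s)$; equivalently $\bigl(\tfrac{d\tilde t}{ds}\bigr)^2 = 1 - \nu^2/m(\tilde t(s))^2$. The geodesic is tangent to a parallel $\tilde t = c$ precisely when $\tfrac{d\tilde t}{ds}$ vanishes there, i.e. when $m(c) = \nu$. So the claim ``$\gamma_\nu$ is not tangent to any parallel'' is equivalent to the statement that $m(\tilde t(s)) > \nu$ for all $s$, i.e. the geodesic never reaches a parallel on which $m$ attains the value $\nu$.

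First I would record the structure of the sublevel and superlevel sets of $m$. Since $m$ is even, positive, and $m'<0$ on $(0,t_0)$ with $m'$ not changing sign there, the restriction of $m$ to $[0,t_0]$ is strictly decreasing from $m(0)$ to $m(t_0)=\inf\{m(t):|t|\le t_0\}$; moreover on all of $\R^1$ one has $m(\tilde t)\ge m(t_0)$ whenever $|\tilde t|\le t_0$, and $m(\tilde t)> m(t_0)\ge\nu$ on $(-t_0,t_0)$ unless $\nu=m(t_0)$ and $|\tilde t|=t_0$. Here I am using only the hypothesis $0\le\nu\le m(t_0)$. Next I would run the standard argument: $\gamma_\nu$ emanates from $\tilde p$ on $\tilde t = 0$ where $m(0)>\nu$, so initially $\tfrac{d\tilde t}{ds}\ne 0$ (unless $\nu$ equals $m(0)$, which is excluded) and $\tilde t(s)$ is monotone in $s$ near $s=0$. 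Suppose for contradiction that $\gamma_\nu$ meets a parallel $\tilde t = c$ with $m(c)=\nu$; since $m(c)=\nu\le m(t_0)$ and $m>m(t_0)$ off $[-t_0,t_0]$ would be needed for… — more carefully: $m(c)=\nu\le m(t_0)\le m(0)$ forces $|c|\ge t_0$ by monotonicity of $m$ on $[0,t_0]$ together with $m\ge m(t_0)$ on $\R$ (and if $t_0=\infty$ this simply cannot happen, giving the conclusion immediately). So the geodesic would have to travel from $\tilde t = 0$ out past $|\tilde t| = t_0$; but on the interval $|\tilde t|\le t_0$ we have $m(\tilde t)>\nu$ except possibly at the single endpoint value, so $\tfrac{d\tilde t}{ds}$ does not vanish on the open interval, hence $\tilde t(s)$ is strictly monotone until the first time $|\tilde t(s)| = t_0$; at that moment $m=m(t_0)\ge\nu$, with equality only if $\nu=m(t_0)$.

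The case $\nu < m(t_0)$ is then clean: $m(\tilde t(s))\ge m(t_0) > \nu$ for all $s$, so $\tfrac{d\tilde t}{ds}$ never vanishes, $\tilde t(s)$ is globally strictly monotone, $\gamma_\nu$ is never tangent to a parallel, and in particular it never returns to $\tilde t = 0$. The delicate case is the borderline $\nu = m(t_0)$ with $t_0$ finite, which is the main obstacle. Here $\tfrac{d\tilde t}{ds}$ could a priori vanish exactly when $\tilde t(s) = \pm t_0$, so one must rule out that $\gamma_\nu$ actually becomes tangent to the parallel $\tilde t = t_0$. I would handle this by noting that if $\gamma_\nu$ were tangent to $\tilde t = t_0$ at some point, then since $m'(t_0)=0$ (because $t_0$ is a critical value of $m$: either $t_0=\infty$, excluded, or $m'$ has a zero at $t_0$ by definition of the supremum and continuity of $m'$), the parallel $\tilde t = t_0$ would itself be a geodesic, and by uniqueness of geodesics $\gamma_\nu$ would coincide with that parallel — contradicting that $\gamma_\nu$ starts on $\tilde t = 0 \ne t_0$. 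Hence $\gamma_\nu$ never reaches $\tilde t = t_0$ with zero $\tilde t$-derivative; combined with the monotonicity on $(-t_0,t_0)$ this shows $|\tilde t(s)|$ stays strictly below $t_0$ forever (it can neither cross nor touch tangentially), $m(\tilde t(s))>m(t_0)=\nu$ throughout, and the conclusion follows as before. I would close by remarking that strict monotonicity of $\tilde t(s)$ along the whole geodesic is exactly the statement that $\gamma_\nu$ does not intersect $\tilde t = 0$ a second time.
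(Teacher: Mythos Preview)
Your argument is correct, and it follows a genuinely different route from the paper's. The paper's proof is global and leans on the standing hypothesis of Section~3 (that the cut locus of $\tilde p$ is contained in $\tilde t^{-1}(0)$): from this it first deduces that each $\gamma_\nu|_{[0,l(\nu)]}$ with $\nu\in(m(t_0),m(0))$ is minimal, passes to the limit $\nu\searrow m(t_0)$ to conclude that $\gamma_{m(t_0)}$ is a ray, and then for $\nu_0\in(0,m(t_0))$ traps $\gamma_{\nu_0}$ in the wedge bounded by the meridian $\gamma_0$ and the ray $\gamma_{m(t_0)}$ (the trapping uses that no two minimal geodesics from $\tilde p$ can meet in $\tilde t<0$), deriving a contradiction if $\gamma_{\nu_0}$ were tangent to a parallel and hence forced back up to $\tilde t=0$.

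Your approach is purely analytic via Clairaut: you use that $m(t_0)=\inf m$ (a direct consequence of the definition $t_0=\sup\{t>0:m'(t)<0\}$, since then $m'\ge 0$ on $(t_0,\infty)$), so for $\nu<m(t_0)$ one has $m(\tilde t(s))\ge m(t_0)>\nu$ everywhere and $(\tilde t)'$ never vanishes; and for the borderline $\nu=m(t_0)$ you invoke $m'(t_0)=0$ together with uniqueness of geodesics to rule out tangency with the parallel $\tilde t=\pm t_0$. This is more elementary and, notably, does not use the cut-locus hypothesis at all. What the paper's route buys is that it establishes \emph{en passant} that $\gamma_{m(t_0)}$ is a ray, a fact reused in the proof of Proposition~\ref{prop4.2}; your route shows the lemma holds under weaker assumptions. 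One small point worth making explicit in your write-up: you invoke ``$m\ge m(t_0)$ on $\R$'' without proof---just record the one-line justification that $m'\ge 0$ on $(t_0,\infty)$ by the definition of $t_0$ as a supremum, so $m$ is nondecreasing there and the global minimum is $m(t_0)$.
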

\begin{proof}
Since there does not exist a cut point of $\tilde p$ in $\tilde t\ne 0,$ the subarc $\gamma_\nu|_{[0,l(\nu)]}$ of $\gamma_\nu$ is minimal for each $\nu\in(m(t_0),m(0)).$
Here $l(\nu)$ denotes the length of the subarc of $\gamma_\nu$  having end points $\tilde p$ and $(\tilde t,\tilde \theta)^{-1}(0,\varphi(\nu)).$ Therefore, the limit geodesic $\gamma_{m(t_0)}=\lim_{\nu\searrow m(t_0)} \gamma_\nu|_{[0,l(\nu)]}$ is a ray emanating from $\tilde p$ and in particular, $\gamma_{m(t_0)}$ is not tangent to any parallel arc and does not intersect $\tilde t=0$ again.
We will prove that for any $\nu\in[0,m(t_0)),$ $\gamma_\nu$ is not tangent to any parallel arc. Suppose that for some $\nu_0\in(0,m(t_0)),$ $\gamma_{\nu_0}$ is tangent to a parallel arc. Since $\wt M$ has a reflection symmetry with respect to $\tilde t=0,$
we may assume that $(\tilde t\circ \gamma_{\nu_0})'(0)<0$ and $(\tilde t \circ\gamma_{m(t_0)})'(0)<0.$
By applying the Clairaut relation at the point $\tilde p,$ $\gamma_{\nu_0}|_{(0,t)}$ lies in the domain $D$ cut off by $\gamma_{m(t_0)}$ and the submeridian $\gamma_0$ of $\tilde \theta=0$ for some positive $t.$ Since there does not exist a cut point of $\tilde p$ in $\tilde t^{-1}(-\infty,0),$ the geodesic $\gamma_{\nu_0}$ does not intersect $\gamma_{m(t_0)}$ again. Hence $\gamma_{\nu_0}|_{(0,\infty)}$ lies in the domain $D.$
Since $\gamma_{\nu_0}$ is tangent to a parallel arc, the geodesic intersects $\tilde t=0$ again, which is a contradiction.

$\qedd$
\end{proof}

\begin{lemma}\label{lem3.2}
Let $\tilde\gamma_\nu : R\to \wt M$ denote a unit speed geodesic with Clairaut constant $\nu\in(0,m(t_0)].$
If $\tilde\gamma_\nu$ passes through a  point of  $\tilde t^{-1}(-t_0,t_0),$ then $\tilde\gamma_\nu$ is not tangent to any parallel arc $\tilde t=c.$

\end{lemma}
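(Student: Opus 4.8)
The plan is to reduce Lemma~\ref{lem3.2} to Lemma~\ref{lem3.1} via the Clairaut relation and the structure of geodesics on a surface of revolution. First I would observe that a unit speed geodesic $\tilde\gamma_\nu$ with Clairaut constant $\nu$ is tangent to the parallel $\tilde t=c$ precisely when $m(c)=\nu$; since $m$ is even, decreasing on $(0,t_0)$ and satisfies $m(t_0)\leq m(\tilde t)$ for $|\tilde t|\leq t_0$, the only parallels in $\tilde t^{-1}(-t_0,t_0)$ to which $\tilde\gamma_\nu$ could be tangent are those with $m(c)=\nu$, and for $\nu<m(t_0)$ there are none such inside that strip, while for $\nu=m(t_0)$ the candidate is $\tilde t=\pm t_0$, which lies on the boundary, not the interior. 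So the real content is: if $\tilde\gamma_\nu$ enters the open strip $\tilde t^{-1}(-t_0,t_0)$ at all, it cannot be tangent to a parallel \emph{anywhere}, equivalently it cannot turn around — it must cross the strip monotonically in $\tilde t$ and escape.

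Here is the argument I would give. Suppose $\tilde\gamma_\nu$ passes through a point $x$ with $|\tilde t(x)|<t_0$ and, for contradiction, that $\tilde\gamma_\nu$ is tangent to some parallel $\tilde t=c$. Using the reflective symmetry of $\wt M$ across $\tilde t=0$ and reparametrizing, I may assume $\tilde\gamma_\nu$ is tangent to $\tilde t=c$ at parameter $0$, i.e.\ $(\tilde t\circ\tilde\gamma_\nu)'(0)=0$, so $m(c)=\nu$ and hence $|c|\geq t_0$ (because $\nu\leq m(t_0)$ and $m$ is $\geq m(t_0)$ only outside the open strip); say $c\geq t_0$. By the Clairaut relation, $(\tilde t\circ\tilde\gamma_\nu)$ has a local extremum at $0$ and the geodesic stays on one side of $\tilde t=c$ near $0$; since it later reaches $\tilde t(x)<t_0\leq c$, the parallel $\tilde t=c$ is a maximum of $\tilde t\circ\tilde\gamma_\nu$ on the relevant arc, so the geodesic descends from $\tilde t=c$ on both sides. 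Now I compare with the geodesic $\gamma_\nu$ emanating from $\tilde p\in\tilde t^{-1}(0)$ with the same Clairaut constant $\nu\in[0,m(t_0)]$ studied in Lemma~\ref{lem3.1}: that lemma says $\gamma_\nu$ is tangent to no parallel and, in particular, crosses $\tilde t=0$ only once. But any geodesic of Clairaut constant $\nu$ that is tangent to $\tilde t=c$ is, up to the isometry group (rotations in $\tilde\theta$ and the reflection in $\tilde t$) and orientation, determined by $\nu$ together with the sign data at the turning point; the piece of $\tilde\gamma_\nu$ descending from $\tilde t=c$ toward $\tilde t=0$ must therefore coincide, after such an isometry, with a piece of $\gamma_\nu$ (run backwards) — contradicting that $\gamma_\nu$ has a turning point at $\tilde t=c$.

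Let me make the comparison step more self-contained, since that is the heart of it. Rather than invoking uniqueness of geodesics with given Clairaut constant, I would argue directly inside $\wt M$. Assume as above that $\tilde\gamma_\nu$ has a maximum of $\tilde t$ at $\tilde t=c\geq t_0$ and descends on, say, the right side into the region $\tilde t<c$. Following $\tilde\gamma_\nu$ to the right, $\tilde t\circ\tilde\gamma_\nu$ strictly decreases as long as $m>\nu$, which holds throughout $\tilde t^{-1}(-t_0,t_0)$ since there $m\geq m(t_0)\geq\nu$ with equality only at $\tilde t=\pm t_0$; hence on this right branch the geodesic decreases monotonically through the whole open strip and, by the reflective symmetry together with $m(-t)=m(t)$, upon reaching $\tilde t=-t_0$ it again has $m=\nu$ only in the limit, so in fact $\tilde t\circ\tilde\gamma_\nu$ keeps decreasing past $-t_0$ as well unless it becomes tangent to $\tilde t=-c$. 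In either case the right branch of $\tilde\gamma_\nu$, restricted to where it stays in $\tilde t<t_0$, is a minimizing geodesic: indeed it is a limit of subarcs of geodesics $\gamma_{\nu'}$, $\nu'\searrow\nu$, emanating from points of $\tilde t^{-1}(0)$, all of which are minimizing by the hypothesis that the cut locus of any point of $\tilde t^{-1}(0)$ lies in $\tilde t^{-1}(0)$ — the same limiting argument as in the proof of Lemma~\ref{lem3.1}. A minimizing geodesic joining the turning-point region to $\tilde t^{-1}(0)$ with a turning point strictly outside $\tilde t^{-1}(0)$, however, is exactly the configuration Lemma~\ref{lem3.1} forbids for Clairaut constant $\nu\leq m(t_0)$, so we have our contradiction. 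The main obstacle, and the step I would spend the most care on, is justifying that the branch of $\tilde\gamma_\nu$ in question is minimizing: one must set up the limiting family $\gamma_{\nu'}$ correctly, use that $m(t_0)<m(0)$ so that $\nu<m(0)$ and such a family exists, and handle the borderline value $\nu=m(t_0)$ by the ray argument already appearing in Lemma~\ref{lem3.1}.
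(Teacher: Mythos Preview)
Your core strategy---show that $\tilde\gamma_\nu$ must cross $\tilde t=0$, then invoke Lemma~\ref{lem3.1} via the rotational symmetry in $\tilde\theta$---is exactly what the paper does. The paper's execution, however, is much shorter: suppose $\tilde\gamma_\nu$ never meets $\tilde t=0$; since $(\tilde t\circ\tilde\gamma_\nu)'\neq 0$ throughout the open strip (Clairaut, using $m>m(t_0)\geq\nu$ there), the function $\tilde t\circ\tilde\gamma_\nu$ is monotone along the appropriate half-line and has a finite limit $\tilde t_1\in(-t_0,0]$; by the standard asymptotic lemma (Lemma~7.1.7 in \cite{SST}) this forces $m'(\tilde t_1)=0$ and $m(\tilde t_1)=\nu$, which is impossible for $\tilde t_1\in(-t_0,0]$ and $\nu\leq m(t_0)$. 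Hence $\tilde\gamma_\nu$ meets $\tilde t=0$, and Lemma~\ref{lem3.1} finishes in one line.

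Your third paragraph's detour through limits of \emph{minimizing} geodesics $\gamma_{\nu'}$ is unnecessary and slightly off target: Lemma~\ref{lem3.1} is not a statement about minimality, it is a statement about the specific geodesic $\gamma_\nu$ through $\tilde p$, and once $\tilde\gamma_\nu$ hits $\tilde t=0$ it \emph{is} such a geodesic up to a $\tilde\theta$-translation---no minimizing property is needed to invoke it. Your second paragraph already contains the correct reduction; the only missing ingredient is the verification that the descending branch actually reaches $\tilde t=0$ (ruling out asymptoting to a parallel inside the strip), and either your own monotonicity observation or the paper's limit argument handles that directly. Drop the minimizing-limit machinery and the proof becomes essentially the paper's.
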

\begin{proof}
First, 
we will prove that $\tilde\gamma_\nu$ intersects $\tilde t=0$ for any $\nu\in[0,m(t_0)].$
Supposing that $\tilde \gamma_\nu$ does not intersect $\tilde t=0$ for some $\nu\in[0,m(t_0)],$ we will get a contradiction.
Since $\wt M$ has a reflective symmetry with respect to $\tilde t=0,$
we may assume that $(\tilde t\circ\tilde\gamma_\nu)(s)<0$ for any real number $s.$
By the Clairaut relation, $(\tilde t\circ\tilde \gamma_\nu)'(s)\ne 0$ for any $s$ satisfying $-t_0<\tilde t\circ\tilde\gamma_\nu(s)<0<t_0.$
From the assumptions, we may assume that $\tilde t\circ\tilde\gamma_\nu(0)\in(-t_0,0).$
If  $(\tilde t\circ\tilde\gamma_\nu)'(0)>0$ (respectively  $(\tilde t\circ\tilde\gamma_\nu)'(0)<0$) then 
 $\tilde t\circ\tilde \gamma_\nu(s)$ is increasing  (respectively decreasing) and bounded above by $0. $ Thus,
there exists a unique limit $ -t_0<\tilde t_1:=\lim_{s\to\infty}\tilde t\circ\tilde\gamma_\nu(s)\leq 0$ (respectively $ -t_0<\tilde t_1:=\lim_{s\to-\infty}\tilde t\circ\tilde\gamma_\nu(s)\leq 0$).
It follows from Lemma 7.1.7 in \cite{SST} that $m'(\tilde t_1)=0$ and $m(\tilde t_1)=\nu.$
This is a contradiction, since $\nu\in[0,m(t_0)]$ and $-t_0<\tilde t_1\leq 0.$
 Therefore, $\tilde\gamma_\nu$ intersects $\tilde t=0$ for any $\nu\in[0,m(t_0)],$ and hence by Lemma \ref{lem3.1}, the geodesic is not tangent to any parallel arc.

$\qedd$
\end{proof}

\begin{lemma}\label{lem3.3}
If $t_0=\sup\{\; t>0\; | \; m'(t)<0\}$ is finite, then
any subarc of the parallel arc $\tilde t=-t_0$ is minimal, i.e., the parallel arc is a straight line.
Hence, $\tilde t=t_0$ is also  a straight line.
\end {lemma}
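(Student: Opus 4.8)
The strategy is to pit the parallel $\tilde t=-t_0$ against an arbitrary minimizing geodesic joining two of its points and to eliminate every competitor by a trichotomy on the Clairaut constant, with Lemma~\ref{lem3.2} doing the essential work. First I would record that the two parallels $\tilde t=\pm t_0$ are geodesics: by definition $m'\ge 0$ on $(t_0,\infty)$, while $m'$ is negative at points arbitrarily near $t_0$ on the left, so continuity of $m'$ gives $m'(t_0)=0$, hence also $m'(-t_0)=0$ since $m$ is even; then Lemma 7.1.4 of \cite{SST} applies. Because the reflection $\tilde t\mapsto-\tilde t$ is an isometry of $\wt M$ interchanging these two parallels, it suffices to prove that every subarc of $\tilde t=-t_0$ is minimal.

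So I would fix two distinct points $A,B\in\tilde t^{-1}(-t_0)$ and a minimizing geodesic $\sigma$ from $A$ to $B$, and aim to show that the subarc of the parallel between $A$ and $B$ — unique, since the parallel is a line in $\wt M$ — has length $d(A,B)$. Let $\nu$ be the Clairaut constant of $\sigma$. Evaluating the Clairaut relation at $A$, where $m\equiv m(t_0)$, gives $\nu\le m(t_0)$. If $\nu=m(t_0)$, then at $A$ the velocity of $\sigma$ is tangent to the parallel, so $\sigma$ coincides with a subarc of the geodesic $\tilde t=-t_0$; hence that subarc realizes $d(A,B)$, which is what we want for this pair. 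If $\nu=0$, then $\sigma$ is a meridian, along which $\tilde t$ is strictly monotone, so $\sigma$ cannot return to $\tilde t=-t_0$ at a second point — contradicting $A\ne B$.

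The remaining case $0<\nu<m(t_0)$ is where the real work lies, and I expect it to be the only genuinely delicate point. I would first check that $\sigma$ never leaves the strip $\tilde t^{-1}[-t_0,t_0]$: if $\tilde t\circ\sigma$ attained a value $c$ with $|c|>t_0$, then at an interior extremum $\sigma$ would be tangent to the parallel $\tilde t=c$, forcing $\nu=m(c)=m(|c|)\ge m(t_0)$ since $m$ is nondecreasing on $[t_0,\infty)$, contrary to $\nu<m(t_0)$. Since $\nu<m(t_0)=m(\tilde t(A))$, the Clairaut relation shows $\sigma$ crosses $\tilde t=-t_0$ transversally at $A$, so it enters the open strip $\tilde t^{-1}(-t_0,t_0)$; extending $\sigma$ to a complete geodesic and applying Lemma~\ref{lem3.2}, $\sigma$ is tangent to no parallel arc, hence $(\tilde t\circ\sigma)'$ never vanishes and $\tilde t\circ\sigma$ is strictly monotone on its whole interval. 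This contradicts $\tilde t\circ\sigma$ equalling $-t_0$ at both endpoints $A\ne B$, so this case does not occur. Therefore $\nu=m(t_0)$ for every such $\sigma$, and the parallel subarc between any $A$ and $B$ is minimal; thus $\tilde t=-t_0$ — and, by the reflective symmetry, $\tilde t=t_0$ — is a straight line. The only subtlety, as noted, is confirming in this last case that a competing geodesic can neither escape the strip $\tilde t^{-1}[-t_0,t_0]$ nor remain on its boundary, which is precisely what makes Lemma~\ref{lem3.2} applicable; everything else is a direct reading of the Clairaut relation.
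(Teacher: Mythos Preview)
Your proof is correct and follows essentially the same approach as the paper's: both show that any geodesic meeting $\tilde t=-t_0$ transversally has Clairaut constant strictly less than $m(t_0)$ and hence, via Lemma~\ref{lem3.2}, is nowhere tangent to a parallel, so $\tilde t$ is strictly monotone along it and it cannot return to $\tilde t=-t_0$. Your version is more explicit than the paper's---you spell out the trichotomy on $\nu$, justify $m'(t_0)=0$, and verify that the competing geodesic actually enters the open strip so that Lemma~\ref{lem3.2} applies---but the logical skeleton is identical.
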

\begin{proof}
Since $m'(t_0)=0,$ the parallel arc $\tilde t=-t_0$ is a geodesic by Lemma 7.1.4 in \cite{SST}.
Let $c$ be a geodesic emanating from  a point on $\tilde t=-t_0$ which is not tangent to $\tilde t=-t_0.$
By Lemma \ref{lem3.2},
$c$ is not tangent to any parallel arc.
In particular, $c$ does not intersect $\tilde t=-t_0$ again. This implies that $\tilde t=-t_0$ is a straight line. Since $\wt M$ has a reflective symmetry with respect to $\tilde t=0,$ $\tilde t=t_0$ is also a straight line.

$\qedd$
\end{proof}

\section{The cut locus of a point in $\wt M$}
 Choose any point $q$ in $\wt M$ with $-t_0<\tilde t(q)<0.$ 
Without loss of generality, we may assume 
 that $\tilde\theta(q)=0$. 
For each $\nu\in[0,m(0))$ let $\gamma_{\nu}:[0,\infty)\to \wt M$ denote a geodesic emanating from the  point $\tilde p:=(\tilde t,\tilde\theta)^{-1}(0,0)$ on $\tilde t^{-1}(0)$ with Clairaut constant $\nu.$ The geodesic $\gamma_\nu$ intersects $\tilde t=0$ again at the point $(\tilde t,\tilde\theta)^{-1}(0,\varphi(\nu)),$ if $\nu>m(t_0).$

 We consider two geodesics $\alpha_\nu$ and $\beta_\nu$ emanating 
 from the point $q=\alpha_\nu(0)=\beta_\nu(0)$ with  Clairaut constant $\nu>0$. 
 Here we assume that the angle
$ \angle (({\partial}/{\partial \tilde t})_q,\alpha'_\nu (0)) $
made by the tangent vectors $(\partial/\partial \tilde t)_q$ and $\alpha'_{\nu(0)}$ is grater than the angle 
 $\angle (({\partial}/{\partial\tilde  t})_q,\beta'_{\nu} (0))$ by $(\partial/\partial \tilde t)_q$ 
and 
$\beta'_{\nu}(0),$ if $\nu<m(t(q)).$ Notice that 
$\alpha_\nu=\beta_\nu$ if $\nu=m(t(q)).$

It follows from Lemma 5.1 in \cite{C} that $\alpha_\nu$ and $\beta_\nu$ intersect again at the point $(\tilde t,\tilde \theta)^{-1}(u,\varphi(\nu)),$ where  $u:=-\tilde t(q), $    if   $\nu\in (m(t_0),m(u)).$
The subarcs of $\alpha_\nu$ and $\beta_\nu$ having end points $q$ and $(\tilde t,\tilde\theta)^{-1}(u,\varphi(\nu))$ have the same length and 
its length  equals   $l(\nu),$ where $l(\nu)$ denotes the length the subarc  of $\gamma_\nu$ having end points $\tilde p$ and  $(\tilde t,\tilde\theta)^{-1}(0,\varphi(\nu)).$


\begin{lemma}\label{lem4.1}
Let $q$ be  a point in $\wt M$ with $|\tilde t(q)|\in(0,t_0).$ 
Then, for any $\nu\in( m(t_0),m(u)],$ where $u=-\tilde t(q),$
$\alpha_\nu|_{[0,l(\nu)]}$ and $\beta_\nu|_{  [0,l(\nu)]}$ are minimal geodesic segments joining $q$ to the point $(\tilde t,\tilde \theta)^{-1}(u,\tilde \theta(q)+\varphi(\nu)), $ and 
in particular,
$\{(\tilde t,\tilde \theta) \: | \: \tilde t=u, \tilde \theta\geq \varphi(m(u))+\tilde\theta(q)\} $
is a subset of the cut locus of the point $q.$
\end{lemma}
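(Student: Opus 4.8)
The plan is to first establish that each $\alpha_\nu|_{[0,l(\nu)]}$ and $\beta_\nu|_{[0,l(\nu)]}$ is minimal, and then deduce the cut-locus statement by a limiting argument as $\nu \searrow m(t_0)$ together with a continuity/compactness consideration at $\nu = m(u)$.

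\textbf{Step 1: Minimality for $\nu \in (m(t_0), m(u))$.} Recall from the discussion preceding the lemma that for such $\nu$ the geodesics $\alpha_\nu$ and $\beta_\nu$ meet again exactly at $(\tilde t,\tilde\theta)^{-1}(u,\varphi(\nu)+\tilde\theta(q))$, the two subarcs have equal length $l(\nu)$, and $l(\nu)$ is the length of the corresponding subarc of $\gamma_\nu$ from $\tilde p$ to $(\tilde t,\tilde\theta)^{-1}(0,\varphi(\nu))$. I would argue that $\gamma_\nu|_{[0,l(\nu)]}$ is minimal because, by hypothesis, the cut locus of $\tilde p$ is contained in $\tilde t^{-1}(0)$, so $\gamma_\nu$ (which leaves $\tilde t=0$ immediately and only returns at parameter $l(\nu)$, by Lemma~\ref{lem3.1} and the half-period description) has no cut point before $l(\nu)$; its first cut point occurs no earlier than the first return to $\tilde t^{-1}(0)$. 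Pulling this minimality back to $\wt M$ viewed from $q$: the map from $\tilde p$-based geodesics to $q$-based geodesics respects Clairaut constant and arc length by the construction in Lemma~5.1 of \cite{C}, so $\alpha_\nu|_{[0,l(\nu)]}$ and $\beta_\nu|_{[0,l(\nu)]}$ inherit minimality. The cleanest way to see this is that both the $\tilde p$-picture and the $q$-picture are obtained by isometrically developing the region between two meridians, so minimality is literally the same statement.

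\textbf{Step 2: The endpoint case $\nu = m(u)$ and the limit $\nu \searrow m(t_0)$.} For $\nu = m(u)$ the geodesic through $q$ with Clairaut constant $m(u)$ is tangent to $\tilde t=u$ at $q$ itself; here $\alpha_\nu = \beta_\nu$ and one must check that this geodesic is still minimal up to its first return to $\tilde t^{-1}(u)$. I would handle this by continuity: minimality is a closed condition on compact subarcs, so taking $\nu \nearrow m(u)$ along the family in Step~1 gives minimality of the limiting arc. Similarly, taking $\nu \searrow m(t_0)$, the subarcs $\alpha_\nu|_{[0,l(\nu)]}$ converge to a minimal ray from $q$ (compare the argument in Lemma~\ref{lem3.1} for $\gamma_{m(t_0)}$), which is not tangent to any parallel by Lemma~\ref{lem3.2}, so it does not return to $\tilde t^{-1}(u)$; this boundary behavior is what guarantees the family of endpoints sweeps out precisely a half-open ray in the parallel $\tilde t=u$.

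\textbf{Step 3: Identifying points of $C_q$.} The endpoint of $\alpha_\nu|_{[0,l(\nu)]}$ is $(\tilde t,\tilde\theta)^{-1}(u,\tilde\theta(q)+\varphi(\nu))$ and that of $\beta_\nu|_{[0,l(\nu)]}$ is the same point; for $\nu \in (m(t_0), m(u))$ these are \emph{two distinct} minimal geodesics from $q$ to that point (they leave $q$ in different directions, by the angle convention in the construction), hence the point is a cut point of $q$. As $\nu$ ranges over $(m(t_0), m(u))$, $\varphi(\nu)$ ranges over $(\varphi(m(u)), \,\text{(limit at }m(t_0)))$ since $\varphi$ is decreasing by Lemma~\ref{lem2.1}; together with the endpoint $\nu = m(u)$ giving the point with $\tilde\theta = \tilde\theta(q)+\varphi(m(u))$ and the limit $\nu \searrow m(t_0)$ pushing the angle up without bound along the ray, this shows $\{(\tilde t,\tilde\theta) : \tilde t = u,\ \tilde\theta \ge \tilde\theta(q) + \varphi(m(u))\} \subseteq C_q$. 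One still needs that the point at $\tilde\theta = \tilde\theta(q)+\varphi(m(u))$ itself lies in $C_q$: it is the limit of cut points, and $C_q$ is closed, so it is in $C_q$.

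\textbf{Main obstacle.} I expect the delicate part to be Step~2 — verifying that the limiting arcs at the two ends of the parameter interval behave as claimed, in particular that nothing degenerates at $\nu = m(u)$ (where $\alpha$ and $\beta$ coalesce) and that the $\nu \searrow m(t_0)$ limit genuinely produces a ray rather than an arc that curls back. The rest is bookkeeping with Clairaut's relation and the monotonicity of $\varphi$. A secondary point requiring care is making rigorous the ``development'' identification that transfers minimality from $\tilde p$-based to $q$-based geodesics; this is implicit in Lemma~5.1 of \cite{C} but should be invoked explicitly.
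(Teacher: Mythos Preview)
Your Step~1 contains the real gap, not Step~2. The ``transfer of minimality'' from $\gamma_\nu|_{[0,l(\nu)]}$ (based at $\tilde p \in \tilde t^{-1}(0)$) to $\alpha_\nu|_{[0,l(\nu)]}$ and $\beta_\nu|_{[0,l(\nu)]}$ (based at $q$ with $\tilde t(q) = -u \ne 0$) is not justified. Lemma~5.1 of \cite{C} gives only equality of arc lengths; it does not furnish an isometry of $\wt M$ carrying one configuration to the other, and since the metric $d\tilde t^2+m(\tilde t)^2d\tilde\theta^2$ depends on $\tilde t$, no such isometry exists. Your phrase ``isometrically developing the region between two meridians'' does not describe an actual isometry here (the surface is not flat), and a length-preserving correspondence of geodesic arcs does not by itself preserve minimality: a priori there could be a geodesic from $q$ to $(u,\varphi(\nu_0))$ with a different Clairaut constant that is strictly shorter than $l(\nu_0)$, even though no such shortcut exists from $\tilde p$ to $(0,\varphi(\nu_0))$.

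The paper establishes minimality directly at $q$, by contradiction. If $\alpha_{\nu_0}|_{[0,l(\nu_0)]}$ were not minimal (with $\nu_0$ taken to be the smallest solution of $\varphi(\nu)=\varphi(\nu_0)$), let $\alpha$ be a true minimal segment from $q$ to $x=(u,\varphi(\nu_0))$. By the structure of geodesics on a surface of revolution joining $q$ to a point on the opposite parallel $\tilde t=u$, $\alpha$ must coincide with $\alpha_{\nu_1}|_{[0,l(\nu_1)]}$ or $\beta_{\nu_1}|_{[0,l(\nu_1)]}$ for some $\nu_1\in(m(t_0),m(u))$, which forces $\varphi(\nu_1)=\varphi(\nu_0)$. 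Since $\varphi$ is decreasing (Lemma~\ref{lem2.1}) it is constant on $[\nu_0,\nu_1]$, and then Lemma~3.2 of \cite{C} gives $l(\nu_1)=l(\nu_0)$, contradicting the assumed non-minimality of $\alpha_{\nu_0}|_{[0,l(\nu_0)]}$. Once minimality is secured this way, your Steps~2 and~3 are essentially correct and match the paper's conclusion.
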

\begin{proof}
Without loss of generality, we may assume that $\tilde\theta(q)=0$ and $-t_0<\tilde t(q)<0.$
We will prove that $\alpha_{\nu}|_{[0,l(\nu)]}$ is a minimal geodesic segment joining $q$ to the point $\alpha_{\nu}(l(\nu))=(\tilde t,\tilde\theta)^{-1}(u,\varphi(\nu)).$
Suppose that $\alpha_{\nu_0}|_{[0,l(\nu_0)]}$ is not minimal for some $\nu_0 \in (m(t_0),m(u)].$
Here we assume that $\nu_0$ is the minimum solution $\nu=\nu_0$ of 
$\varphi(\nu)=\varphi(\nu_0).$

Let  $\alpha:[0,d(q,x)]\to M$ be a unit speed minimal geodesic  segment  joining $q$ to $x:=\alpha_{\nu_0}(l(\nu_0))=(\tilde t,\tilde\theta)^{-1}(u,\varphi(\nu_0)).$
Hence,  $\varphi(\nu_1)=\varphi(\nu_0)=\tilde\theta(x)$ and 
$\alpha $ equals $\alpha_{\nu_1}|_{[0,l(\nu_1)]}$ or $\beta_{\nu_1}|_{[0,l(\nu_1)]},$
 where
$\nu_1 \in( m(t_0),m(u))$ 
denotes the Clairaut constant of $\alpha.$
By Lemma \ref{lem2.1}, $\varphi(\nu)=\varphi(\nu_0)$ for any
 $\nu\in[\nu_0,\nu_1].$
 Hence, by Lemma 3.2 in \cite{C}
 we get, $l(\nu_1)=l(\nu_0).$
This implies that $\alpha_{\nu_0}|_{[0,l(\nu_0)]}$ is minimal, which is a  contradiction, since we assumed that  $\alpha_{\nu_0}|_{[0,l(\nu_0)]}$ is not minimal.
Therefore,   
for any $\nu\in(m(t_0),m(u)],$
the  geodesic segments $\alpha_{\nu}|_{[0,l(\nu)]}$ and 
$\beta_{\nu}|_{[0,l(\nu)]}$
 are minimal geodesic segments joining $q$ to the point $(\tilde t,\tilde\theta)^{-1}(u,\varphi(\nu))=\alpha_\nu(l(\nu)).$
In particular,   the point 
$\alpha_\nu(l(\nu))=\beta_\nu(l(\nu))   $ 
is a cut point of $q.$
$\qedd$
\end{proof}

\begin{proposition}\label{prop4.2}
The cut locus of any point $q$ with $| \tilde t(q)|<t_0$ equals  the set
$$ 
\{(\tilde t,\tilde \theta) \: | \: \tilde t=u, \tilde \theta\geq |\varphi(m(u))|\}. $$
Here the coordinates $(\tilde t , \tilde\theta)$ are chosen so as to satisfy $\tilde\theta(q)=0.$
\end{proposition}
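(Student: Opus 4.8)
The plan is to prove the displayed equality by two inclusions, the first being essentially Lemma~\ref{lem4.1} plus the reflective symmetry of $\wt M$, the second a case analysis of the geodesics issuing from $q$ according to their Clairaut constant. For the inclusion ``$\supseteq$'': Lemma~\ref{lem4.1} already gives $(\tilde t,\tilde\theta)^{-1}(u,\varphi(\nu))\in C_q$ for every $\nu\in(m(t_0),m(u)]$. Since $\varphi$ is continuous and decreasing on $(m(t_0),m(0))$ (Lemma~\ref{lem2.1} and the remark opening \S3) and $\varphi(\nu)\to\infty$ as $\nu\searrow m(t_0)$, the set $\{(\tilde t,\tilde\theta)^{-1}(u,\varphi(\nu)):\nu\in(m(t_0),m(u)]\}$ is exactly the ray $\{\tilde t=u,\ \tilde\theta\ge\varphi(m(u))\}$. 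As $(\tilde t,\tilde\theta)\mapsto(\tilde t,-\tilde\theta)$ is an isometry of $\wt M$ fixing $q$ (recall $\tilde\theta(q)=0$), the set $C_q$ is invariant under it and hence also contains the mirror ray $\{\tilde t=u,\ \tilde\theta\le-\varphi(m(u))\}$; together these give the set in the statement.

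For the inclusion ``$\subseteq$'' I would show that no point off $\tilde t^{-1}(u)$ and no point of $\tilde t^{-1}(u)$ with $|\tilde\theta|<\varphi(m(u))$ can be a cut point of $q$. Let $x\in C_q$ and pick a minimal geodesic $\gamma$ from $q$ to $x$ along which $x$ is the cut point; let $\nu$ be the absolute value of its Clairaut constant, so $0\le\nu\le m(t(q))=m(u)$. If $\nu\le m(t_0)$, then $\gamma$ meets $\tilde t^{-1}(-t_0,t_0)$ (it contains $q$), so by Lemma~\ref{lem3.2} it is tangent to no parallel, $\tilde t\circ\gamma$ is strictly monotone, and $\gamma$ escapes; running the arguments of Lemmas~\ref{lem3.1} and \ref{lem3.2} with base point $q$ in place of $\tilde p$ then shows $\gamma$ carries no cut point, contradicting $x\in C_q$. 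Hence $\nu\in(m(t_0),m(u)]$, and $\gamma$ coincides, up to the reflection $\tilde\theta\mapsto-\tilde\theta$, with $\alpha_\nu|_{[0,l(\nu)]}$ or $\beta_\nu|_{[0,l(\nu)]}$: when $\nu<m(u)$ two \emph{distinct} minimal geodesics from $q$ reach $(\tilde t,\tilde\theta)^{-1}(u,\varphi(\nu))$, and when $\nu=m(u)$ that point is the first conjugate point of $q$ along $\gamma$, so in either case (Lemma~\ref{lem4.1}) the cut point along $\gamma$ occurs at parameter $l(\nu)$, namely at $(\tilde t,\tilde\theta)^{-1}(u,\pm\varphi(\nu))$. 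Therefore $x\in\tilde t^{-1}(u)$ with $|\tilde\theta(x)|=\varphi(\nu)\ge\varphi(m(u))$, as required.

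I expect the main obstacle to be the claim, in the case $\nu\le m(t_0)$, that a geodesic from $q$ carries no cut point: Lemmas~\ref{lem3.1} and \ref{lem3.2} are stated for geodesics from a point of $\tilde t^{-1}(0)$, so one must check that their proofs survive the replacement of $\tilde p$ by an arbitrary $q$ with $|\tilde t(q)|<t_0$. The ingredients that make this go through are that $q$ lies strictly between the straight lines $\tilde t=\pm t_0$ (Lemma~\ref{lem3.3}); that the limit geodesic $\gamma_{m(t_0)}$ emanating from $q$, obtained as $\lim_{\nu\searrow m(t_0)}\alpha_\nu|_{[0,l(\nu)]}$, is again a ray by Lemma~\ref{lem4.1} and so plays the bounding role of the ray $\gamma_{m(t_0)}$ from $\tilde p$; and that the absence of cut points of $\tilde p$ off $\tilde t^{-1}(0)$ transfers by comparing a tail of $\gamma$ (beyond its crossing of $\tilde t^{-1}(0)$) with a $\tilde\theta$-translate of an escaping geodesic from $\tilde p$. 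A minor analytic point, needed for the ``$\supseteq$'' direction, is the limit $\varphi(\nu)\to\infty$ as $\nu\searrow m(t_0)$, which follows from $m'(t_0)=0$ when $t_0$ is finite and from $\xi(\nu)\to\infty$ when $t_0=\infty$; and the degenerate case $\tilde t(q)=0$ is just the standing hypothesis, with $\varphi(m(0))$ read as $\lim_{\nu\nearrow m(0)}\varphi(\nu)$.
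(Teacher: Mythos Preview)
Your ``$\supseteq$'' direction matches the paper's. The difference is in ``$\subseteq$''. The paper does not attempt to show that every geodesic from $q$ with Clairaut constant $\nu\le m(t_0)$ is cut-point free; instead it argues indirectly. Assuming a cut point $y$ outside the claimed set, it invokes the \emph{tree structure} of the cut locus on a simply connected surface to find an \emph{end point} $x$ of $C_q$ lying outside the closed region $D(\beta^+,\alpha^-)$ bounded by the two limit rays $\alpha^-=\alpha_{m(t_0)}$ and $\beta^+=\beta_{m(t_0)}$. An end point of the cut locus is necessarily a conjugate point along any minimal geodesic $\gamma$ from $q$ to $x$. Because $\gamma$ runs outside $D(\beta^+,\alpha^-)$, the Clairaut relation at $q$ forces its constant to be below $m(t_0)$; then Lemma~\ref{lem3.2} says $\gamma$ is tangent to no parallel, and Corollary~7.2.1 of \cite{SST} rules out any conjugate point --- contradiction.

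Your direct case analysis has a real gap precisely at the point you flag as the ``main obstacle''. Lemmas~\ref{lem3.1}, \ref{lem3.2} together with Corollary~7.2.1 of \cite{SST} yield only the absence of \emph{conjugate} points along a geodesic with $\nu\le m(t_0)$, not the absence of cut points: a cut point can equally well arise as the first meeting point with another minimal geodesic. Your proposed transfer (``comparing a tail of $\gamma$ beyond its crossing of $\tilde t^{-1}(0)$ with a $\tilde\theta$-translate of an escaping geodesic from $\tilde p$'') does not close this, because the competing geodesic from $q$ need not have the same Clairaut constant, nor need it pass through the same point of $\tilde t^{-1}(0)$; the cut-locus information for $\tilde p$ controls pairs of geodesics issuing from $\tilde p$, not from $q$. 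The paper's end-point trick sidesteps all of this: once you are at an end point of $C_q$, only the conjugate-point obstruction is in play, and that is exactly what Corollary~7.2.1 kills. Inserting that one step --- tree structure $\Rightarrow$ end point $\Rightarrow$ conjugate --- into your argument would make it go through and would in fact reproduce the paper's proof.
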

\begin{proof}
Without of loss of generality, we may assume that $-t_0<\tilde t(q)<0.$
By Lemma \ref{lem4.1}, the 
geodesic segments $\alpha_\nu|_{[0,l(\nu)]}$ and $\beta_\nu|_{[0,l(\nu)]}$
are minimal for any $\nu\in( m(t_0),m(u)].$
Hence their limit geodesics $\alpha^{-}:=\alpha_{ m(t_0)}$ and $\beta^+:=\beta_{ m(t_0)}$ are 
rays, that is, any  their  subarcs are minimal.
Since $\wt M$ has a reflective symmetry with respect to $\tilde\theta=0,$
it is trivial from Lemma \ref{lem4.1} that 
the set 
$\{(\tilde t,\tilde \theta) \: | \: \tilde t=u, \tilde \theta\geq |\varphi(m(u))|\}$
is  a subset of the cut locus of $q.$
Suppose that there exists a cut point $y\notin 
\{(\tilde t,\tilde \theta) \: | \: \tilde t=u, \tilde \theta\geq| \varphi(m(u))| \}.$
Without loss of generality, we may assume that $\tilde\theta(y)>0=\tilde\theta(q).$
Since the cut locus of $q$ has  a tree structure,
there exists an end point $x$ of the cut locus in the set  $\{(\tilde t,\tilde\theta)\: |\; \tilde\theta>0\}\setminus D(\beta^+,\alpha^-),$
where $D(\beta^+,\alpha^-)$ denotes the closure of the unbounded domain cut off by $\beta^+$ and $\alpha^-.$
Hence, $x$ is conjugate to $q$
along  any minimal geodesic segments $\gamma$ joining $q$ to $x.$ 
Since such a minimal geodesic $\gamma$ runs in the set $\{(\tilde t,\tilde\theta)\: |\; \tilde\theta>0\}\setminus D(\beta^+,\alpha^-),$
by applying the Clairaut relation at the point $q,$ we get that
the Clairaut constant of $\gamma$ is positive and less than $m(t_0).$
Notice that the geodesics  $\beta^+$ and $\alpha^-$ have the same Clairaut constant $m(t_0).$
It follows from  Lemma \ref{lem3.1} that the  geodesic $\gamma$  cannot be tangent to any parallel arc.
From Corollary  7.2.1 in \cite{SST},  $\gamma$ has no conjugate point of $q,$ which is a contradiction.
$\qedd$
\end{proof}

\begin{lemma}\label{lem4.3}
Let $q$ be a point in $\wt M$ with $|\tilde t(q)|=t_0.$
Then, the cut locus of $q$ is empty.

\end{lemma}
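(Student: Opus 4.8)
I would show that no point of $\wt M$ can be a cut point of $q$ by exhibiting, for every $y\neq q$, a unique minimal geodesic from $q$ to $y$ and ruling out conjugate points along it. First I would fix $q$ with $\tilde t(q)=t_0$ (the case $\tilde t(q)=-t_0$ follows from the reflective symmetry with respect to $\tilde t=0$), and recall from Lemma~\ref{lem3.3} that both $\tilde t=t_0$ and $\tilde t=-t_0$ are straight lines. The key structural input is Lemma~\ref{lem3.2}: any geodesic with Clairaut constant $\nu\in(0,m(t_0)]$ that meets $\tilde t^{-1}(-t_0,t_0)$ is not tangent to any parallel; combined with the fact (Lemma~\ref{lem3.1}) that geodesics from a point on $\tilde t=0$ with small Clairaut constant never return to $\tilde t=0$, this says the meridian-like geodesics through $q$ behave monotonically in $\tilde t$.

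Next I would split the analysis according to the Clairaut constant $\nu$ of a geodesic $\gamma$ emanating from $q$. Because $m$ attains its minimum-on-$[0,t_0]$ value $m(t_0)$ at $t_0$, the Clairaut relation at $q$ forces $\nu\le m(t_0)$ for \emph{every} geodesic $\gamma$ from $q$; moreover $\nu=m(t_0)$ occurs only for $\gamma$ tangent to $\tilde t=t_0$, i.e. $\gamma$ runs along the straight line $\tilde t=t_0$ and is therefore globally minimal with no conjugate points. For $\nu\in[0,m(t_0))$, Lemma~\ref{lem3.2} (applied after noting $\gamma$ must enter $\tilde t^{-1}(-t_0,t_0)$, since it cannot be tangent to $\tilde t=t_0$ and $m$ is increasing on $(t_0,\infty)$ off the minimum so the only way to stay in $\tilde t\ge t_0$ would be to be tangent to a parallel there) shows $\gamma$ is not tangent to any parallel arc. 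Then Corollary~7.2.1 in \cite{SST}, exactly as used in the proof of Proposition~\ref{prop4.2}, gives that $\gamma$ has no conjugate point of $q$. Hence no cut point of $q$ can be of conjugate type.

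It remains to exclude cut points of the first, non-conjugate kind, i.e.\ points joined to $q$ by two distinct minimal geodesics. Here I would argue that the map sending a unit tangent vector at $q$ to its geodesic is injective on the level of endpoints: since every geodesic from $q$ is either the straight line $\tilde t=t_0$ or a geodesic monotone in $\tilde t$ (entering $\tilde t<t_0$ on one side, and by the same Clairaut/Lemma~\ref{lem3.2} argument applied in $\wt M$, monotone all the way, never tangent to a parallel, hence never turning back), two such geodesics issuing from $q$ in distinct directions cannot meet again — this is the same domain-separation reasoning as in the proof of Lemma~\ref{lem3.1}, using that $\wt M$ has no cut points away from $\tilde t=0$ for points of $\tilde t=0$ and then transporting that to $q$ via the fact that $\gamma$ stays in one of the two regions cut off by the two limiting rays through $q$. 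Combined with the absence of conjugate points, $C_q=\emptyset$.

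**Main obstacle.** The delicate step is the no-return/monotonicity claim for geodesics from a point with $\tilde t(q)=t_0$: a priori such a geodesic could oscillate near $\tilde t=t_0$ since $m'(t_0)=0$ makes $\tilde t=t_0$ a geodesic and hence a potential accumulation parallel. The resolution must go through Lemma~7.1.7 of \cite{SST} (as in Lemma~\ref{lem3.2}): a geodesic asymptotic to $\tilde t=t_1$ forces $m'(t_1)=0$ and $m(t_1)=\nu$, and since $\nu<m(t_0)=m$ at the only critical level in $[0,t_0]$, this is impossible, so $\gamma$ genuinely crosses into $\tilde t^{-1}(-t_0,t_0)$ and Lemma~\ref{lem3.2} applies cleanly. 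Everything else is a repetition of the arguments already used for Proposition~\ref{prop4.2} and Lemma~\ref{lem3.1}.
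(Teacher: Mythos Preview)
Your treatment of the conjugate-point part is essentially what the paper does: any geodesic from $q$ with Clairaut constant $\nu<m(t_0)$ is never tangent to a parallel (Lemma~\ref{lem3.2}), hence has no conjugate point by Corollary~7.2.1 in \cite{SST}; and the case $\nu=m(t_0)$ is the straight line $\tilde t=\pm t_0$ from Lemma~\ref{lem3.3}. So far, so good.

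The gap is in your step excluding ``cut points of the first kind,'' i.e.\ points reached by two distinct minimal geodesics. Your sketch invokes ``the same domain-separation reasoning as in the proof of Lemma~\ref{lem3.1},'' but that argument relied on the \emph{hypothesis} that a point on $\tilde t=0$ has no cut point off $\tilde t=0$; there is no analogous hypothesis available for $q$ on $\tilde t=\pm t_0$, and ``transporting'' the conclusion via limiting rays is not justified. Two geodesics from $q$ with distinct Clairaut constants $0<\nu_1<\nu_2<m(t_0)$, both monotone in $\tilde t$, are not a priori prevented from meeting again; monotonicity in $\tilde t$ alone does not give injectivity of the exponential map.

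The paper sidesteps this entirely by a single structural observation you are missing: since $\wt M$ is simply connected, the cut locus $C_q$ (if nonempty) is a tree and therefore has an end point $x$, and any end point of a cut locus is necessarily a \emph{conjugate} point along the minimal geodesic to it. So it suffices to rule out conjugate points, which you have already done. With that one sentence your uniqueness argument becomes unnecessary, and the proof is complete. I would replace your second half with this: suppose $C_q\neq\emptyset$; pick an end point $x$; then $x$ is conjugate to $q$ along a minimal $\gamma$; since $x$ does not lie on the straight line $\tilde t=\pm t_0$ (Lemma~\ref{lem3.3}), the Clairaut constant of $\gamma$ is strictly less than $m(t_0)$, and the no-conjugate-point argument gives a contradiction.
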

\begin{proof}

We may assume that $\tilde t(q)=-t_0,$ since $\wt M$ has a reflective symmetry with respect to $\tilde t=0.$
Supposing that there exists a cut point $x$ of $q,$ we will get a contradiction.
Since $\wt M$ is simply connected, the cut locus has an end point.
Hence, we may assume that the cut point $x$ is an end point of $C_q.$
Let $\gamma$ be a minimal geodesic segment joining $q$ to $x.$
Then,  $x$ is a conjugate point of $q$ along $\gamma,$ since $x$ is an end point of the cut locus.
From Lemma \ref{lem3.3}, $\tilde t(x)\ne -t_0.$
By applying the Clairaut relation, we obtain that the Clairaut constant of $\gamma$ is smaller than $m(t_0),$
and hence $\gamma$ is not tangent to any parallel arc by Lemma \ref{lem3.2}.
Therefore, by Corollary 7.2.1 in \cite{SST}, there does not exist a conjugate point of q along $\gamma,$ which is a contradiction.

$\qedd$
\end{proof}

\begin{lemma}\label{lem4.4}
Let $q$ be a point in $\wt M$ with $|\tilde t(q)|> t_0.$
If the Gaussian curvature of $\tilde M$ is nonpositive on $\tilde t^{-1}(-\infty,-t_0)\cup \tilde  t^{-1}(t_0,\infty),$
then the cut locus of  the point $q$  is empty. 
\end{lemma}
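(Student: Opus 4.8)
The plan is to argue by contradiction, following the pattern of the proof of Lemma~\ref{lem4.3}. I would first note that if $t_0=\infty$ there is no point $q$ with $|\tilde t(q)|>t_0$, so the statement is vacuous; hence I may assume $t_0$ is finite and, using the reflective symmetry of $\wt M$ with respect to $\tilde t=0$, that $\tilde t(q)<-t_0$. Assuming $C_q\neq\emptyset$, simple connectedness of $\wt M$ gives an end point $x$ of $C_q$; I fix a minimal geodesic segment $\gamma$ joining $q$ to $x$, and, since $x$ is an end point of $C_q$, the point $x$ is conjugate to $q$ along $\gamma$.

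The first main step is to show that $\gamma$ must pass through a point of $\tilde t^{-1}(-t_0,t_0)$. If it did not, then, since $\tilde t\circ\gamma$ is continuous and $\tilde t(\gamma(0))=\tilde t(q)<-t_0$, the function $\tilde t\circ\gamma$ would take values only in $(-\infty,-t_0]$; at any parameter where it attains $-t_0$ it would have an interior maximum, so $\gamma$ would be tangent there to the parallel $\tilde t=-t_0$, and since $m'(t_0)=0$ this parallel is a geodesic --- indeed a straight line by Lemma~\ref{lem3.3} --- so uniqueness of geodesics would force $\gamma$ to coincide with it, contradicting that $x$ is conjugate to $q$ along $\gamma$. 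Thus $\gamma$ would in fact lie entirely in the open set $\tilde t^{-1}(-\infty,-t_0)$, on which the Gaussian curvature is nonpositive by hypothesis; but a geodesic contained in a region of nonpositive curvature has no conjugate point, again a contradiction. Hence $\gamma$ enters $\tilde t^{-1}(-t_0,t_0)$.

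The second step is to bound the Clairaut constant $\nu$ of $\gamma$. Since $\gamma$ runs from $\tilde t(q)<-t_0$ into $\tilde t^{-1}(-t_0,t_0)$, the intermediate value theorem provides a parameter at which $\gamma$ meets the parallel $\tilde t=-t_0$, and at that point $\gamma$ is transversal to the parallel (a tangential meeting would once more make $\gamma$ coincide with that parallel); the Clairaut relation there gives $\nu<m(-t_0)=m(t_0)$. Also $\nu>0$, for otherwise $\gamma$ would be a submeridian, which is minimal and hence has no conjugate point. Thus $\nu\in(0,m(t_0))$ and $\gamma$ passes through $\tilde t^{-1}(-t_0,t_0)$, so Lemma~\ref{lem3.2} shows that $\gamma$ is not tangent to any parallel arc, and then Corollary 7.2.1 in \cite{SST} yields that $\gamma$ has no conjugate point of $q$ --- the final contradiction. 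Therefore $C_q=\emptyset$.

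I expect the first step --- ruling out that $\gamma$ stays in the region $\tilde t\le-t_0$ of nonpositive curvature --- to be the main obstacle. It rests on the two facts that a geodesic tangent to the parallel $\tilde t=-t_0$ must be that parallel (via $m'(t_0)=0$ and Lemma~\ref{lem3.3}) and that a geodesic contained in a region of nonpositive curvature has no conjugate point; the care is in the case distinction between $\tilde t\circ\gamma$ merely touching the level $-t_0$ at an interior maximum and genuinely crossing it, which is also what delivers the strict bound $\nu<m(t_0)$ in the second step.
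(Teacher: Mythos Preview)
Your argument is correct and follows essentially the same route as the paper's own proof: contradiction via an end point $x$ of $C_q$, the dichotomy ``$\gamma$ stays in $\tilde t\le -t_0$'' (ruled out by nonpositive curvature) versus ``$\gamma$ enters $\tilde t^{-1}(-t_0,t_0)$'' (ruled out via the Clairaut bound, Lemma~\ref{lem3.2}, and Corollary~7.2.1 of \cite{SST}). Your version is simply more explicit in two places where the paper is terse---the tangency sub-case at $\tilde t=-t_0$ (which you dispose of using Lemma~\ref{lem3.3} and uniqueness of geodesics) and the $\nu=0$ sub-case---so there is no substantive difference in approach.
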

\begin{proof}
Suppose that the cut locus of  a point $q$ with $|\tilde t(q)|> t_0$ is nonempty.
Since $\wt M$ has a reflective symmetry with respect to $\tilde t=0,$ we may assume that $\tilde t(q)< -t_0.$ 
Supposing the existence of  a cut point of $q,$ we will  get a contradiction.
We may assume that there exists an end point $x$ of $C_q,$ since $\wt M$ is simply connected.
Let $\gamma:[0,d(q,x)]\to \wt M$ be a unit speed minimal geodesic joining $q$ to $x.$
If  $\tilde  t(\gamma(s))\leq -t_0$ for any $s\in(0,d(q,x)],$ then $\gamma$ has no conjugate point of $q,$ since the Gaussian curvature is nonpositive on $\tilde t^{-1}(-\infty,-t_0)\cup\tilde t^{-1}(t_0,\infty).$
This contradicts the fact that $x$ is an end point of $C_q.$ 
Thus we may assume that  $\tilde t(\gamma(s))>-t_0$ for some $s\in(0,d(q,x)].$
This implies that $\gamma$ passes through a point of  $\tilde t^{-1}(-t_0,t_0).$ 
It follows from the Clairaut relation that the Caliraut constant of $\gamma$ is smaller than $m(t_0).$ Hence, from Corollary  7.2.1 in \cite{SST} and Lemma \ref{lem3.2}, there  does not exist a conjugate point of $q$ along $\gamma,$  which is a contradiction.

$\qedd$
\end{proof}

\bigskip
\noindent
{\it Proof of Main Theorem}\medskip\\
Since the functions $m$ and $\varphi$ are decreasing on $[0,t_0)$ and $(m(t_0),m(0))$ respectively, the composite function $\varphi\circ m$ is increasing on $(0,t_0).$
It  is clear to see that 
$\lim_{t\nearrow t_0}\varphi\circ m(t)=\infty,$
since the minimal geodesic segment $\gamma_\nu|_{[0,l(\nu)  ]}$ converges to the ray $\gamma_{m(t_0)}$ as $\nu\searrow m(t_0).$
Let $t=t_\pi\in[0,t_0)$ be a solution of $\varphi\circ m(t)=\pi.$ Define $t_\pi=0$ if there is no solution. Hence, $\varphi\circ m(t)\geq \pi $ on $[t_\pi,t_0)$ and 
$\varphi\circ m(t)\leq \pi$ on $(0,t_\pi).$ Now the Main Theorem is clear from Proposition \ref{prop4.2} and Lemma \ref{lem4.3}.

\section{ A family of cylinders of revolution}
An example 
  of a cylinder of revolution satisfying the two properties 1 and 2 in the introduction 
was given by Tamura \cite{T}.
 The Riemannian metric $ds^2$ is defined  by 
$ds^2=dt^2+e^{-t^2}d\theta^2.$
It is easy to see  that 
$m'=-2t \cdot m<0$
on $ (0,\infty),$
and the Gaussian curvature $G(q)$ at a point $q$ is
$-4t^2(q)+2.$
This implies that the Gaussian curvature is decreasing on each upper 
half meridian of the surface.
By Lemma 4.5 in \cite{C}, the cut locus of a point on $\tilde t=0$ on the universal covering space of the surface  is  a nonempty subset of $\tilde t=0.$ 
Hence, this surface satisfies the assumptions of the Main Theorem.
The following family of  cylinders of revolution shows that the converse is not true, i.e.,
 under the assumptions of the Main Theorem, the decline of the Gaussian curvature does not always hold.

In this section we give a family of cylinders of revolution $\{M_{\lambda}\}_{\lambda}:=\{(R\times S^1,dt^2+m_\lambda(t)^2d\theta^2)\}_{\lambda}$ 
satisfying
the assumtions in the Main Theorem, where $\lambda>1$ denotes a parameter and 
\begin{equation}\label{eq5.1}
m_{\lambda}(t):=\frac{\cosh t}{\sqrt{1+ \lambda \sinh^{2}t}}.
\end{equation}

\begin{lemma}\label{lem5.1}
The Gaussian curvature $G(q)$ at a point $q\in M_{\lambda}$ 
is given by 
\begin{equation} \label{eq5.2}
G(q)=(\lambda -1)(\frac{3}{h^2(t(q))}-\frac{2}{h(t(q))}),
\end{equation}
where $h(t)=1+ \lambda \sinh ^2 t$.
In particular, the Gaussian curvature  $G$ is not monotonic along the upper half meridian 
$\theta^{-1}(0) \cap t^{-1}(0,\infty)$.
\end{lemma}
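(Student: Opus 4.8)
The plan is to use the standard formula $G=-m_\lambda''/m_\lambda$ for the Gaussian curvature of a warped product metric $dt^2+m(t)^2d\theta^2$, carrying out the two differentiations in terms of $h(t)=1+\lambda\sinh^2 t$ rather than expanding everything into powers of $\sinh$ and $\cosh$ separately. The final monotonicity claim will then fall out by reparametrising the meridian by $h$.

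First I would record the elementary identities $h'=2\lambda\sinh t\cosh t$ and $\lambda\sinh^2 t=h-1$, together with $\cosh^2 t=1+\sinh^2 t$. Writing $m_\lambda=\cosh t\cdot h^{-1/2}$ and differentiating once, one gets $m_\lambda'=h^{-3/2}\bigl(\sinh t\cdot h-\tfrac12\cosh t\cdot h'\bigr)$, and the bracket collapses, after replacing $\cosh^2 t$ by $1+\sinh^2 t$, to $(1-\lambda)\sinh t$; thus $m_\lambda'=(1-\lambda)\sinh t\cdot h^{-3/2}$. This single simplification is what makes the rest routine. Differentiating a second time gives $m_\lambda''=(1-\lambda)h^{-5/2}\bigl(\cosh t\cdot h-\tfrac32\sinh t\cdot h'\bigr)$, and the bracket here reduces (again using $\cosh^2 t=1+\sinh^2 t$) to $\cosh t\,(1-2\lambda\sinh^2 t)$, so that $m_\lambda''=(1-\lambda)\cosh t\,(1-2\lambda\sinh^2 t)\,h^{-5/2}$. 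Hence $G=-m_\lambda''/m_\lambda=(\lambda-1)(1-2\lambda\sinh^2 t)\,h^{-2}$, and substituting $\lambda\sinh^2 t=h-1$ turns $1-2\lambda\sinh^2 t$ into $3-2h$, which yields the claimed formula $G(q)=(\lambda-1)\bigl(3/h(t(q))^2-2/h(t(q))\bigr)$.

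For the non-monotonicity statement I would change variables to $u:=h(t(q))=1+\lambda\sinh^2 t(q)$, a smooth strictly increasing function of $t(q)$ on $[0,\infty)$ with $u(0)=1$ and $u\to\infty$ as $t(q)\to\infty$. Along the upper half meridian, $G$ is therefore $(\lambda-1)f(u)$ with $f(u)=(3-2u)/u^2=3u^{-2}-2u^{-1}$, and since $\lambda>1$ the monotonicity of $G$ in $t(q)$ is equivalent to that of $f$ on $[1,\infty)$. Computing $f'(u)=2(u-3)/u^3$ shows $f$ is strictly decreasing on $[1,3)$ and strictly increasing on $(3,\infty)$, with $f(1)=1>0>-\tfrac13=f(3)$ and $f(u)\to0$; hence $G$ is strictly decreasing and then strictly increasing along $\theta^{-1}(0)\cap t^{-1}(0,\infty)$, and in particular not monotonic.

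There is no genuine obstacle here. The only point requiring care is the bookkeeping in the two differentiations — in particular, reducing $\cosh^2 t$ to $1+\sinh^2 t$ at each stage so that the numerators factor cleanly — and the observation that $h$ is the natural variable in which to express both the curvature and the subsequent one-line calculus argument for non-monotonicity.
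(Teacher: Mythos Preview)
Your proof is correct and follows essentially the same route as the paper: both compute $m_\lambda'$ and $m_\lambda''$ directly from $m_\lambda=\cosh t\cdot h^{-1/2}$ (the paper writes $m_\lambda'=(1-\lambda)m_\lambda\tanh t/h$, which is the same as your $(1-\lambda)\sinh t\cdot h^{-3/2}$) and then simplify $-m_\lambda''/m_\lambda$ using $\lambda\sinh^2 t=h-1$. Your non-monotonicity argument via $f(u)=3u^{-2}-2u^{-1}$ on $[1,\infty)$ spells out what the paper simply declares ``trivial'' from the curvature formula.
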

\begin{proof}
From \eqref{eq5.1}, we get
\begin{equation}\label{eq5.3}
m'_\lambda(t)=(1-\lambda)m_\lambda(t)\tanh t/h(t),
\end{equation}
and
\begin{equation*}
m''_\lambda(t)=\left(   (1-\lambda){\tanh t}/{h(t)} \right)^2m_{\lambda}(t)+(1-\lambda)m_{\lambda}(t)(h(t)/\cosh^{2}t-h'(t)\tanh t)/h(t)^2.
\end{equation*}
Thus, we obtain
$$
m''_\lambda=(1-\lambda)m_\lambda(t)((1-\lambda)\tanh^2t+h(t)/\cosh^2t-h'(t)\tanh t)/h^2.$$
Since 
$(1-\lambda)\tanh^2t+h(t)/\cosh^{2}t=1$ holds,
we have 
\begin{equation*}
-{m''_\lambda(t)}/{m_\lambda(t)}=(\lambda-1)\left({3}/{h^2(t)}-{2}/{h(t)}\right).
\end{equation*}
Since $G(q)=-{m''_\lambda(t(q))}/{m_\lambda(t(q))},$ we
obtain
\eqref{eq5.2}.
By \eqref{eq5.4},
it is trivial to see that the Gaussian curvature is not monotonic along the upper half meridian.

$\qedd$
\end{proof}

\begin{lemma}\label{lem5.2}
Let $a,b\in(0,1)$ be numbers with $a<b.$
Then,
\begin{equation}\label{eq5.4}
\int_b^1\frac{dx}{x(x-a)\sqrt{ (x-b)(1-x)  }   }
=\frac{\pi}{a(1-a)}  
\left(  \frac{a-1}{\sqrt {b}}+\frac{1}{c}  \right)
\end{equation}
holds,
where $c=\sqrt{{(b-a)}/{(1-a)}    } .$

\end{lemma}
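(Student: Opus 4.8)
The integral to be evaluated is
$$
I := \int_b^1 \frac{dx}{x(x-a)\sqrt{(x-b)(1-x)}},
$$
with $0<a<b<1$. The plan is to reduce this to a standard integral over a square-root of a quadratic by a substitution that linearizes $(x-b)(1-x)$, then split the rational factor $\frac{1}{x(x-a)}$ by partial fractions, and finally integrate each piece. Because the two poles $x=0$ and $x=a$ both lie to the \emph{left} of the interval of integration $[b,1]$, each resulting elementary integral will be of the type $\int \frac{dx}{(x-p)\sqrt{(x-b)(1-x)}}$ with $p\le 0$ or $p<b$, which has a clean closed form.

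First I would make the trigonometric substitution $x = b\cos^2\varphi + \sin^2\varphi = 1-(1-b)\cos^2\varphi$, so that as $\varphi$ runs over $[0,\pi/2]$, $x$ runs over $[b,1]$, and $\sqrt{(x-b)(1-x)} = (1-b)\sin\varphi\cos\varphi$, while $dx = 2(1-b)\sin\varphi\cos\varphi\, d\varphi$. Then $dx/\sqrt{(x-b)(1-x)} = 2\,d\varphi$, and $I$ becomes $2\int_0^{\pi/2} \frac{d\varphi}{x(\varphi)\,(x(\varphi)-a)}$. Next, write the partial fraction decomposition
$$
\frac{1}{x(x-a)} = \frac{1}{a}\left(\frac{1}{x-a} - \frac{1}{x}\right),
$$
so that $I = \dfrac{2}{a}\left(J_a - J_0\right)$, where $J_p := \int_0^{\pi/2}\frac{d\varphi}{x(\varphi)-p}$ for $p\in\{0,a\}$. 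Each $J_p$ is an integral of the form $\int_0^{\pi/2}\frac{d\varphi}{(1-p) - (1-b)\cos^2\varphi}$, i.e. $\int_0^{\pi/2}\frac{d\varphi}{A - B\cos^2\varphi}$ with $A = 1-p$, $B = 1-b$, and $A>B>0$ in both cases (since $p\le a<b<1$). The standard evaluation $\int_0^{\pi/2}\frac{d\varphi}{A-B\cos^2\varphi} = \frac{\pi}{2\sqrt{A(A-B)}}$ then gives
$$
J_0 = \frac{\pi}{2\sqrt{1\cdot(1-(1-b))}} = \frac{\pi}{2\sqrt{b}}, \qquad
J_a = \frac{\pi}{2\sqrt{(1-a)\bigl((1-a)-(1-b)\bigr)}} = \frac{\pi}{2\sqrt{(1-a)(b-a)}} = \frac{\pi}{2(1-a)c},
$$
using $c = \sqrt{(b-a)/(1-a)}$, so that $(1-a)c = \sqrt{(1-a)(b-a)}$.

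Assembling the pieces, $I = \dfrac{2}{a}\left(\dfrac{\pi}{2(1-a)c} - \dfrac{\pi}{2\sqrt b}\right) = \dfrac{\pi}{a}\left(\dfrac{1}{(1-a)c} - \dfrac{1}{\sqrt b}\right) = \dfrac{\pi}{a(1-a)}\left(\dfrac{1}{c} + \dfrac{a-1}{\sqrt b}\right)$, which is exactly \eqref{eq5.4}. The only genuinely delicate point is bookkeeping: one must confirm the orientation of the substitution (that $\varphi=0$ corresponds to $x=1$ and $\varphi=\pi/2$ to $x=b$, or vice versa — either way $dx/\sqrt{\cdots}$ is positive) and check that $A>B>0$ so the formula $\frac{\pi}{2\sqrt{A(A-B)}}$ is valid with the correct (positive) branch of the square roots; the hypothesis $a<b<1$ is precisely what guarantees this. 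If one prefers to avoid the trigonometric detour, the same result follows by the Euler-type substitution $x-b = (1-b)\,s^2/(1+s^2)$ or by directly citing a table integral for $\int \frac{dx}{(x-p)\sqrt{(x-b)(1-x)}}$; I would use whichever keeps the sign tracking most transparent.
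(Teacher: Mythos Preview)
Your proof is correct. The approach differs in organization from the paper's, though the underlying substitution is essentially the same: the paper sets $u=\sqrt{(x-b)/(1-x)}$, which is exactly $u=\tan\varphi$ in your trigonometric substitution. Where the two proofs diverge is after this change of variables. The paper simply writes down the candidate antiderivative
\[
\frac{2}{a(1-a)}\left(\frac{a-1}{\sqrt b}\arctan\frac{u}{\sqrt b}+\frac{1}{c}\arctan\frac{u}{c}\right)
\]
and verifies by differentiation that it works, then evaluates at the endpoints $u=0$ and $u=\infty$. You instead perform the partial-fraction split $\frac{1}{x(x-a)}=\frac{1}{a}\bigl(\frac{1}{x-a}-\frac{1}{x}\bigr)$ first and reduce each piece to the standard integral $\int_0^{\pi/2}\frac{d\varphi}{A-B\cos^2\varphi}=\frac{\pi}{2\sqrt{A(A-B)}}$. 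Your route is more systematic and makes it clear \emph{where} the formula comes from; the paper's route is shorter once one has the antiderivative in hand but offers no motivation for its form. One small remark: you already stated correctly that $\varphi=0$ gives $x=b$ and $\varphi=\pi/2$ gives $x=1$, so the later hedging about orientation is unnecessary.
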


\begin{proof}
From a direct computation,
we obtain
\begin{equation}\label{eq5.5}
\frac{d}{du}\left( \frac{a-1}{\sqrt b}\arctan\frac{u}{\sqrt b}+\frac{1}{c}\arctan\frac{u}{c}
\right)=\frac{a-1}{u^2+b}+\frac{1}{u^2+c^2  }  
\end{equation}
and 
\begin{equation}\label{eq5.6}
\frac{du}{dx}=\frac{(1-b)u}{2(x-b)(1-x)},
\end{equation}
where 
$u=\sqrt{{(x-b)}/{(1-x)}   }.$  
    Since  $c^2=(b-a)/(1-a),$ we get
\begin{equation}\label{eq5.7}
\frac{a-1}{u^2+b}+\frac{1}{u^2+c^2}=\frac{a(1-a)(1-x)}{(1-b)x(x-a)}.
\end{equation}
By \eqref{eq5.5}, \eqref{eq5.6} and \eqref{eq5.7},
we have
$$\frac{d}{dx}\left( \frac{a-1}{\sqrt b}\arctan\frac{u}{\sqrt b}+\frac{1}{c}\arctan\frac{u}{c} 
\right)=\frac{a(1-a)}{2}\frac{1}{x(x-a)\sqrt{ (x-b)(1-x)  }}.$$
This implies that
 \begin{equation*}
\int\frac{dx}{x(x-a)\sqrt{ (x-b)(1-x)  }   }=\frac{2}{a(1-a)}\left(\frac{a-1}{\sqrt b}\arctan\frac{u}{\sqrt b}+\frac{1}{c}\arctan\frac{u}{c}
\right)
\end{equation*}
holds.
Hence, we obtain \eqref{eq5.4}.
$\qedd$
\end{proof}

By \eqref{eq5.1} and \eqref{eq5.3},
we get $\inf m_\lambda=1/\sqrt\lambda$ and $m_\lambda'(t)<0$ for any $t>0.$
Hence the half period function $\varphi(\nu)$  for $M_\lambda$ is defined on $(1/\sqrt\lambda,1).$

\begin{lemma}\label{lem5.3}
The half period function $\varphi(\nu)$ is given by
\begin{equation*}
\varphi(\nu)=
{\pi}\left(-\sqrt{\lambda-1}+\frac{\lambda\nu}{\sqrt{\lambda\nu^2-1 } }   \right)
\end{equation*}
on $(\frac{1}{\sqrt\lambda},1)$.
In particular $\varphi$ is decreasing on $(\frac{1}{\sqrt\lambda},1)$
and the surface $M_\lambda$ satisfies the assumptions of the Main Theorem.
\end{lemma}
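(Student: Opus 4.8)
The plan is to compute $\varphi(\nu)$ directly from its defining integral \eqref{eq1.1} by substituting the explicit warping function $m_\lambda$ and reducing the resulting integral to the form handled by Lemma~\ref{lem5.2}. Recall
$$\varphi(\nu)=2\int_0^{\xi(\nu)}\frac{\nu}{m_\lambda\sqrt{m_\lambda^2-\nu^2}}\,dt,$$
where $\xi(\nu)=\min\{t>0\mid m_\lambda(t)=\nu\}$; since $m_\lambda$ is strictly decreasing on $(0,\infty)$ with $m_\lambda(0)=1$ and $\inf m_\lambda=1/\sqrt\lambda$, for each $\nu\in(1/\sqrt\lambda,1)$ the value $\xi(\nu)$ is the unique positive $t$ with $m_\lambda(t)=\nu$. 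First I would change variables to $x:=m_\lambda(t)^2=\cosh^2 t/h(t)$, which decreases from $1$ (at $t=0$) to $1/\lambda$ (as $t\to\infty$), so the $t$-integral over $[0,\xi(\nu)]$ becomes an $x$-integral over $[\nu^2,1]$. From \eqref{eq5.3} one has $m_\lambda'=(1-\lambda)m_\lambda\tanh t/h$, hence $dx/dt=2m_\lambda m_\lambda' = 2(1-\lambda)x\tanh t/h$; together with the algebraic identities $h=1+\lambda\sinh^2 t$, $\cosh^2 t = h x$ and $\sinh^2 t=\cosh^2t-1 = hx-1$, one solves for $\tanh^2 t$ and $h$ in terms of $x$ and obtains $dt$ as an explicit algebraic function of $x\,dx$. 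The integrand $\nu/(m_\lambda\sqrt{m_\lambda^2-\nu^2}) = \nu/(\sqrt{x}\sqrt{x-\nu^2})$ then combines with $dt$ to produce, after simplification, a constant multiple of $dx/\big(x(x-a)\sqrt{(x-b)(1-x)}\big)$ for suitable $a,b$ depending on $\nu$ and $\lambda$.

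The key bookkeeping step is to pin down $a$ and $b$. Writing $h$ in terms of $x$: from $\cosh^2 t = hx$ and $\sinh^2 t = hx-1$ we get $h = 1+\lambda(hx-1)$, so $h(1-\lambda x) = 1-\lambda$, i.e. $h = (\lambda-1)/(\lambda x-1)$ (note $\lambda x - 1>0$ on $(1/\lambda,1)$). Then $\tanh^2 t = \sinh^2 t/\cosh^2 t = (hx-1)/(hx) = 1 - 1/(hx)$, which becomes a rational function of $x$, and substituting $h$ one finds $1/(hx) = (\lambda x-1)/((\lambda-1)x)$, so $\tanh^2 t = ((\lambda-1)x-\lambda x+1)/((\lambda-1)x) = (1-x)/((\lambda-1)x)$. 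This is where the factor $(1-x)$ in the square root appears. Differentiating $x = \cosh^2 t/h$ and re-expressing, one gets $dt = \big(\text{const}\big)\,dx/\big(x(\lambda x-1)\sqrt{(1-x)(\dots)}\big)$; matching against the template of Lemma~\ref{lem5.2} forces $a = 1/\lambda$ (from the factor $\lambda x - 1 = \lambda(x-1/\lambda)$) and $b = \nu^2$ (from the lower endpoint of integration, where $m_\lambda^2-\nu^2 = x-\nu^2$ vanishes). The remaining constants $1/a(1-a)$ and $c = \sqrt{(b-a)/(1-a)}$ from \eqref{eq5.4} then collapse: with $a=1/\lambda$, $1-a=(\lambda-1)/\lambda$, and $c = \sqrt{(\nu^2-1/\lambda)/((\lambda-1)/\lambda)} = \sqrt{(\lambda\nu^2-1)/(\lambda-1)}$, so $1/c = \sqrt{(\lambda-1)/(\lambda\nu^2-1)}$ and $a-1 = -(\lambda-1)/\lambda$, $(a-1)/\sqrt b = -(\lambda-1)/(\lambda\nu)$. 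Carefully tracking the overall prefactor (the $2$ in \eqref{eq1.1}, the constants from the substitution, and the $\pi/a(1-a)$ from Lemma~\ref{lem5.2}) should yield exactly
$$\varphi(\nu) = \pi\Big(-\sqrt{\lambda-1} + \frac{\lambda\nu}{\sqrt{\lambda\nu^2-1}}\Big).$$

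Once the formula is established, monotonicity is immediate: $\tfrac{d}{d\nu}\big(\lambda\nu/\sqrt{\lambda\nu^2-1}\big) = \lambda\big(\sqrt{\lambda\nu^2-1}-\lambda\nu^2/\sqrt{\lambda\nu^2-1}\big)/(\lambda\nu^2-1) = -\lambda/(\lambda\nu^2-1)^{3/2}<0$ on $(1/\sqrt\lambda,1)$, so $\varphi$ is strictly decreasing there. Since $m_\lambda'<0$ on $(0,\infty)$ means $t_0=\infty$ for $M_\lambda$, Proposition~\ref{prop2.3} applies: the Gaussian curvature on $\tilde t^{-1}(0)$ is $G = -m_\lambda''(0)/m_\lambda(0) = (\lambda-1)(3-2) = \lambda-1 > 0$ by \eqref{eq5.2}, and $\varphi$ is decreasing on $(\inf m_\lambda, m_\lambda(0)) = (1/\sqrt\lambda, 1)$, so the cut locus of a point on $\tilde t^{-1}(0)$ is a nonempty subset of $\tilde t^{-1}(0)$; hence $M_\lambda$ satisfies the hypotheses of the Main Theorem. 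The main obstacle is purely computational: carrying out the substitution $x = m_\lambda^2$ cleanly and reconciling all multiplicative constants so that the reduction to \eqref{eq5.4} is exact — in particular, getting the sign and the $\sqrt{\lambda-1}$ term right. I do not anticipate any conceptual difficulty beyond bookkeeping, and the identity $(1-\lambda)\tanh^2 t + h/\cosh^2 t = 1$ already used in the proof of Lemma~\ref{lem5.1} is the algebraic crutch that makes the change of variables go through.
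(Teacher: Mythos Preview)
Your proposal is correct and follows essentially the same route as the paper: the substitution $x=m_\lambda(t)^2$, the identities $h=(\lambda-1)/(\lambda x-1)$ and $\tanh^2 t=(1-x)/((\lambda-1)x)$, the reduction to the integral of Lemma~\ref{lem5.2} with $a=1/\lambda$, $b=\nu^2$, and finally the appeal to Proposition~\ref{prop2.3}. Your derivative computation $\varphi'(\nu)=-\pi\lambda/(\lambda\nu^2-1)^{3/2}$ is in fact cleaner than what appears in the paper (which contains a spurious extra term), and your explicit check that $G>0$ on $\tilde t^{-1}(0)$ via \eqref{eq5.2} is a nice touch the paper leaves implicit.
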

\begin{proof}
By putting $x:=m^2_\lambda(t),$ we get, by \eqref{eq5.3},
\begin{equation}\label{eq5.8}
dt=\frac{h(t)}{2(1-\lambda)x\tanh t}dx.
\end{equation}
Since $x={(1+\sinh t^2)}/{h(t)},$
\begin{equation}\label{eq5.9}
\sinh^2t=\frac{1-x}{\lambda x-1}, \quad \cosh^2t=\frac{(\lambda-1)x}{\lambda x-1}, \quad {\rm and\:\:} h(t)=\frac{(\lambda-1)}{\lambda x-1}.
\end{equation}
By combining \eqref{eq5.8} and  \eqref{eq5.9},
we obtain,
\begin{equation}\label{eq5.10}
dt=\frac{-\sqrt{\lambda-1}}{2(\lambda x -1)\sqrt{x(1-x)}}dx.
\end{equation}
Therefore, by \eqref{eq1.1},
$$\varphi(\nu)=\nu\sqrt{\lambda-1}\int^1_{\nu^2}\frac{dx}{x(\lambda x-1)\sqrt{(x-\nu^2 )(1-x )} }$$
for $\nu\in(1/\sqrt\lambda,1).$
It follows from Lemma \ref{lem5.2} that 
$
\varphi(\nu)=
{\pi}\left(-\sqrt{\lambda-1}+{\lambda\nu}/{\sqrt{\lambda\nu^2-1 } }   \right).$
It is easy to check that 
$\varphi'(\nu)={-\pi}\left( {1}/{ ( 2\sqrt{\lambda-1})}+{\lambda}/{\sqrt{   \lambda\nu^2-1} ^3   }\right)<0$
on $(1/\sqrt\lambda,1).$
Therefore, by Proposition \ref{prop2.3}, the surface $M_\lambda$ satisfies the assumptions of the Main Theorem.

$\qedd$
\end{proof}

\bigskip

{\noindent\large\bf Acknowledgments}

I would like to express my gratitude to Professor Minoru TANAKA 
who kindly gave me guidance for the lectures and  numerous comments.


\bigskip

\begin{center}
Pakkinee CHITSAKUL\\

\bigskip

Department of Mathematics\\
King Mongkut's Institute of Technology Ladkrabang\\
Ladkrabang, Bangkok\\
10\,--\,520 Thailand

\medskip
{\tt kcpakkin@kmitl.ac.th}
\end{center}

\end{document}